\newtheorem{theorem}{Theorem}[section]
\newtheorem{lemma}[theorem]{Lemma}
\newtheorem{fact}{Fact}[section]
\newtheorem{conjecture}[theorem]{Conjecture}
\newtheorem{claim}{Claim}[section]
\begin{document}
\textwidth 150mm \textheight 225mm
\title{Spectral radius conditions for the existence \\ of all subtrees of diameter at most four
\thanks{Supported by the National Natural Science Foundation of China (No. 11871398)
and China Scholarship Council (No. 202006290071).}
}
\author{{Xiangxiang Liu$^{1,2}$, Hajo Broersma$^{2,}$\thanks{Corresponding author.}, Ligong Wang$^{1}$}\\
{\small $^{1}$ School of Mathematics and Statistics,}\\ {\small Northwestern Polytechnical University, Xi'an, Shaanxi 710129, PR China}\\
{\small $^{2}$ Faculty of Electrical Engineering, Mathematics and Computer Science,}\\ {\small University of Twente, P.O. Box 217, 7500 AE Enschede, The Netherlands}\\
{\small E-mail: xxliumath@163.com; h.j.broersma@utwente.nl; lgwangmath@163.com}}
\date{}
\maketitle
\begin{center}
\begin{minipage}{120mm}
\vskip 0.3cm
\begin{center}
{\small {\bf Abstract}}
\end{center}
{\small
Let $\mu(G)$ denote the spectral radius of a graph $G$. We partly confirm a conjecture due to Nikiforov, which is a spectral radius analogue of the well-known Erd\H{o}s-S\'os Conjecture that any tree of order $t$ is contained in a graph of average degree greater than $t-2$. Let $S_{n,k}=K_{k}\vee\overline{K_{n-k}}$, and let $S_{n,k}^{+}$ be the graph obtained from $S_{n,k}$ by adding a single edge joining two vertices of the independent set of $S_{n,k}$. In 2010, Nikiforov conjectured that for a given integer $k$, every graph $G$ of sufficiently large order $n$ with $\mu(G)\geq \mu(S_{n,k}^{+})$ contains all trees of order $2k+3$, unless $G=S_{n,k}^{+}$. We confirm this conjecture for trees with diameter at most four, with one exception. In fact, we prove the following stronger result for $k\geq 8$. If a graph $G$ with sufficiently large order $n$ satisfies $\mu(G)\geq \mu(S_{n,k})$ and $G\neq S_{n,k}$, then $G$ contains all trees of order $2k+3$ with diameter at most four, except for the tree obtained from a star $K_{1,k+1}$ by subdividing each of its $k+1$ edges once.
}
{\small
\vskip 0.1in \noindent {\bf Keywords}: \ Brualdi-Solheid-Tur\'{a}n type problem, spectral radius, trees of diameter at most four  \vskip
0.1in \noindent {\bf AMS Subject Classification (2020)}: \ 05C50, 05C35
}
\end{minipage}
\end{center}

\section{Introduction}
\label{sec:trees-intro}
The work presented here is motivated by several open problems (and partial solutions) in extremal graph theory and their counterparts in spectral graph theory. In extremal graph theory, a well-known Tur\'an-type problem is to determine the maximum number of edges a graph can have without admitting a specific subgraph. Similarly, the celebrated Erd\H{o}s-S\'os Conjecture comes down to determining the maximum number of edges a graph can have without admitting some tree of a specific order. Such problems and conjectures have natural counterparts in spectral graph theory, e.g., dealing with the maximum spectral radius a graph can have without belonging to a specific class of graphs, or without containing a specific subgraph. The latter type of problem was coined a Brualdi-Solheid-Tur\'{a}n type problem by Nikiforov in \cite{Nikiforov}. There the author also argues that  ``the study of Brualdi-Solheid-Tur\'an type problems is an important topic in spectral graph theory''. Before we give more details about the background and motivation for our work, we start by giving some essential definitions and introducing some useful notation.

Let $G$ be a simple undirected graph with vertex set $V(G)=\{v_{1},v_{2},\ldots,v_{n}\}$ and edge set $E(G)$, where $n$ is called the order of $G$. Let $e(G)=|E(G)|$. The \emph{adjacency matrix} $A(G)$ of $G$ is the $n\times n$ matrix $(a_{ij})$, where $a_{ij}=1$ if $v_{i}$ is adjacent to $v_{j}$, and $a_{ij}=0$ otherwise. The \emph{spectral radius} of $G$ is the largest eigenvalue of $A(G)$, denoted by $\mu(G)$. The spectral radius of a graph $G$ is related to the distribution of edges in $G$. For example, it is well-known that the average degree of $G$ is at most $\mu(G)$, and $\mu(G)$ is at most the maximum degree of $G$.

Adopting the notation of \cite{Nikiforov}, let $S_{n,k}$ denote the graph obtained by joining every vertex of a complete graph $K_{k}$ to every vertex of an independent set of order $n-k$, that is, $S_{n,k}=K_{k}\vee\overline{K_{n-k}}$. Let $S_{n,k}^{+}$ be the graph obtained from $S_{n,k}$ by adding a single edge joining two vertices of the independent set of $S_{n,k}$. A \emph{star} of order $t+1$ is denoted by $K_{1,t}$. A star $K_{1,t}$ is called \emph{nontrivial} if $t\geq 1$. For convenience, we consider $K_{1,0}=K_1$ as a trivial star. Let $S_{2,\ldots,2}$ be the tree of order $2k+3$ obtained from a star $K_{1,k+1}$ by subdividing each of its $k+1$ edges exactly once (see Figure~\ref{trees1}).

A graph $G$ is said to be \emph{$H$-free}, if $H$ is not a subgraph of $G$. In order to avoid confusion, please note that we mean subgraph here and not induced subgraph. The \emph{Tur\'{a}n number} of $H$ is the maximum number of edges in an $H$-free graph of order $n$, and denoted by $ex(n,H)$. As a natural spectral radius counterpart of Tur\'{a}n type problems, Nikiforov \cite{Nikiforov} posed the following general Brualdi-Solheid-Tur\'{a}n type problem: what is the maximum spectral radius of an $H$-free graph of order $n$? In the past decade, this problem has been studied for various choices of $H$, such as for complete graphs \cite{Wilf}, complete bipartite graphs \cite{Babai,Nikiforov1}, and for cycles or paths of specified length \cite{GaoHou,Nikiforov2,Nikiforov,Zhailin,ZhaiWang}. More results on spectral extremal problems can be found in \cite{ChenLiu,ChenLiu1,Ciofeng,Tait}.

As we mentioned before, our work is motivated by a spectral radius analogue of the well-known Erd\H{o}s-S\'{o}s Conjecture that a graph of average degree greater than $t-2$ admits any tree of order $t$. This conjecture was solved for trees of diameter at most four by McLennan \cite{McLenn}. Based on the Erd\H{o}s-S\'{o}s Conjecture, Nikiforov proposed the following Brualdi-Solheid-Tur\'{a}n type conjecture concerning trees.

\begin{conjecture}\label{co:trees1}{\normalfont (\cite{Nikiforov})} Let $k\geq 2$ and let $G$ be a graph of sufficiently large order $n$.
\begin{itemize}
\item[{\rm (a)}] If $\mu(G)\geq \mu(S_{n,k})$, then $G$ contains all trees of order $2k+2$, unless $G=S_{n,k}$.
\item[{\rm (b)}] If $\mu(G)\geq \mu(S_{n,k}^{+})$, then $G$ contains all trees of order $2k+3$, unless $G=S_{n,k}^{+}$.
\end{itemize}
\end{conjecture}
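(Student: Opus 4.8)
The plan is to combine a spectral--stability analysis that determines the coarse structure of $G$ with an elementary embedding argument; note first that the naive route through McLennan's theorem fails, since $\mu(G)\ge\mu(S_{n,k})$ only forces $e(G)$ to exceed roughly $kn/2$, far below the threshold $e(G)>(2k+1)n/2$ needed to apply the Erd\H{o}s--S\'os bound for trees of order $2k+3$. I would begin by recording the spectral estimates: $\mu(S_{n,k})$ is the largest root of $x^{2}-(k-1)x-k(n-k)=0$, so $\sqrt{k(n-k)}<\mu(S_{n,k})<\sqrt{k(n-k)}+k$ and $\mu(S_{n,k})^{2}>kn-k^{2}$. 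Let $\mathbf{x}=(x_{v})$ be the Perron eigenvector of $G$, scaled so that $\max_{v}x_{v}=x_{u}=1$. The eigenequation at $u$ gives $d(u)\ge\mu(G)\ge\mu(S_{n,k})$, and iterating it once gives the walk bound $\sum_{v\in N(u)}d(v)\ge\mu(G)^{2}>kn-k^{2}$; pushing this further --- weighting walks by $\mathbf{x}$ and splitting $V(G)$ according to the size of $x_{v}$ --- one shows that, unless $G$ already contains a clique on $2k+3$ vertices, almost all of the mass of $\mathbf{x}$ concentrates on a set $D$ of exactly $k$ vertices and all but $o(n)$ vertices of $G$ are joined to all of $D$.

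The structural target is: for $n$ large and $k\ge 8$, if $\mu(G)\ge\mu(S_{n,k})$ and $G\ne S_{n,k}$ then $G$ contains at least one of $K_{2k+3}$, $K_{k+1,\,2k+2}$, or $S_{2k+3,k}^{+}$ (the ``$+$'' edge lying inside the independent part of size $k+3$). The hypothesis $G\ne S_{n,k}$ is used as follows: if the $k$-set $D$ above can be chosen with $V(G)\setminus D$ independent, then $G$ is a spanning subgraph of $K_{k}\vee\overline{K_{n-k}}\cong S_{n,k}$, and since $S_{n,k}$ is connected, $\mu(G)=\mu(S_{n,k})$ forces $G=S_{n,k}$, which is excluded; hence $V(G)\setminus D$ carries an edge, which after a clean-up supplies the ``$+$'' edge. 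The substance of the argument is in establishing this trichotomy: one must show that a near-dominating $k$-set cannot be spoiled by a few exceptional vertices --- whose presence would, by a direct eigenvector comparison, push $\mu(G)$ strictly below $\mu(S_{n,k})$ --- and that the core of $G$ is forced into one of the three listed shapes rather than some other configuration of comparable spectral radius. This needs sharp, not merely asymptotic, comparisons of Perron roots; it is here that the restriction $k\ge 8$ enters, controlling the $O(k^{2})$-sized error terms against the $\Theta(\sqrt{kn})$-sized gaps between the relevant spectral radii.

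The embedding lemma is then routine. Both $K_{2k+3}$ and $K_{k+1,\,2k+2}$ contain every tree on $2k+3$ vertices (for the latter: such a tree is bipartite with parts of sizes at most $k+1$ and $2k+2$). For $S_{2k+3,k}^{+}$ --- clique part $D$ of $k$ vertices, independent part of $k+3$ vertices --- argue by the shape of the tree $T$ through its centre $c$: a star or double star sits on one or two adjacent clique vertices; for $\mathrm{diam}(T)=4$ put $d=\deg_{T}(c)$ and $a_{1}\ge\cdots\ge a_{d}\ge 0$ for the leaf-counts of the $d$ branches, so $d+\sum a_{i}=2k+2$. If $d\le k$, embed $c$ in the independent part and the $d$ branches on $d$ clique vertices; the $k-d$ leftover clique vertices together with the $k+2$ leftover independent-part vertices form a common pool of exactly $\sum a_{i}$ vertices into which the leaves fit. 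If $d\ge k+1$ then $\sum a_{i}=2k+2-d\le k+1$, and unless $T=S_{2,\ldots,2}$ (the case $d=k+1$ with every $a_{i}=1$) at least one branch is a pendant at $c$; then embed $c$ on a clique vertex, the non-pendant branches on the remaining clique vertices with their leaves in the independent part, the pendants partly on leftover clique vertices and partly in the independent part, and --- exactly in the sub-case with a single pendant, which forces the branch multiset $(2,1,\ldots,1)$ --- use the ``$+$'' edge once to give a private leaf to the lone branch that has no clique vertex left. A short count shows the $k$ clique vertices and $k+3$ independent-part vertices are used up precisely, so $T\subseteq S_{2k+3,k}^{+}$; and $S_{2,\ldots,2}\not\subseteq S_{n,k}^{+}$, so $S_{2,\ldots,2}$ is a genuine exception, consistent with the hypothesis $G\ne S_{n,k}$. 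Assembling the pieces proves the theorem. The step I expect to be genuinely hard is the structural trichotomy --- wringing this much rigidity, and in particular the usable extra edge, out of the single inequality $\mu(G)\ge\mu(S_{n,k})$.
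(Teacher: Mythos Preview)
The statement you were given is Conjecture~1.1, which the paper does \emph{not} prove; it remains open for general trees. What the paper actually establishes is Theorem~1.2, covering only trees of diameter at most four (and excluding $S_{2,\ldots,2}$), and your proposal is visibly aimed at that restricted result rather than the full conjecture---your embedding analysis works tree-by-tree through the centre and so only applies in the diameter-$\le 4$ setting.

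Even as a proof of Theorem~1.2, your proposal has a genuine gap. The entire argument rests on the ``structural trichotomy'' (that $G$ contains $K_{2k+3}$, $K_{k+1,2k+2}$, or $S_{2k+3,k}^{+}$), which you do not prove and explicitly flag as the hard step. This is not a detail to be filled in later: it is essentially the whole theorem, and it is not clear the trichotomy even holds in that clean form. Your outline (concentrate the Perron mass on a $k$-set $D$, show almost all vertices are fully joined to $D$, control exceptional vertices by eigenvector comparison) is a reasonable programme, but the passage from ``near-dominating $k$-set plus one stray edge'' to ``contains one of three specific subgraphs'' is where all the difficulty lies, and nothing in the proposal touches it. A minor point: your embedding paragraph is slightly off---it is not only the single-pendant case $(2,1,\ldots,1)$ that needs the ``$+$'' edge; the tree $T_{2}$ (with $d=k+2$, two pendants, branch multiset $(1,\ldots,1)$) also requires it, since after placing the centre and $k-1$ branch centres in the $k$-clique one non-pendant branch centre must land in the independent part.

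The paper's route is entirely different and avoids any global structure theorem. It applies a matrix lemma of Gao and Hou to the polynomial $f(x)=x^{2}-(k-1)x-k(n-k)$ to locate a single vertex $u$ with $B_{u}=\sum_{x\in N^{1}(u)}d_{L_{u}}(x)-(k-2)d_{G}(u)-k(n-k)\ge 0$, and then works \emph{locally} in the first and second neighbourhoods of $u$. For each target tree $T$ (first the class $\mathcal{T}^{\ast}$ with $p'\le k-1$, then the four trees $T_{2},T_{3},T_{4},T_{5}$ with $p'=k$) it runs a case analysis on $d_{G}(u)$ and on $e(N^{1}(u),N^{2}(u))$, invoking the Tur\'an number for star forests and McLennan's diameter-four Erd\H{o}s--S\'os theorem to force the star forest $F_{T}$ inside $N^{1}(u)\cup N^{2}(u)$, and then completes to $T$ through $u$. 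Your embedding observation for $S_{2k+3,k}^{+}$ is pleasant, but it does not substitute for the missing structural step.
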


In \cite{Nikiforov}, Nikiforov confirmed both conjectures for the paths $P_{2k+2}$ and $P_{2k+3}$, respectively. Recently, Hou et al. \cite{HouLiu} proved that Conjecture~\ref{co:trees1} (a) holds for all trees of diameter at most four. In this paper, we show that Conjecture~\ref{co:trees1} (b) holds for all trees of diameter at most four, except for $S_{2,\ldots,2}$. Note that $\mu(S_{n,k})<\mu(S_{n,k}^+)$. We in fact prove the following stronger result.

\begin{theorem}\label{th:trees}
Let $k\geq 8$ and let $G$ be a graph of sufficiently large order $n$. Let $\mathcal{T}$ be the set of all trees of order $2k+3$ and diameter at most four, except for $S_{2,\ldots,2}$. If $\mu(G)\geq \mu(S_{n,k})$ and $G\neq S_{n,k}$, then $G$ contains all trees in $\mathcal{T}$.
\end{theorem}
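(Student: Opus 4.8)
The plan is to combine a spectral/stability argument with a purely combinatorial embedding result. I would first invoke (a quantitative form of) McLennan's theorem on the Erd\H{o}s--S\'os conjecture for trees of diameter at most four: a graph with average degree greater than $2k$ contains every tree of order $2k+3$. Since $\mu(S_{n,k})=\frac{k-1}{2}+\sqrt{(\frac{k+1}{2})^2+k(n-k)}\sim\sqrt{kn}$, the hypothesis $\mu(G)\ge\mu(S_{n,k})$ does not by itself force large enough average degree globally, so the real work is to locate a suitable dense substructure. The standard device here is to show that $G$ has a vertex subset $W$ of bounded size whose removal still leaves enough edges, or more precisely to analyze the structure of $G$ via the eigenvector: let $\mathbf{x}$ be the Perron vector of $G$ normalized so that $\max_i x_i=1$, attained at a vertex $u$. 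From $\mu(G)x_u=\sum_{v\sim u}x_v$ and $\mu(G)\gtrsim\sqrt{kn}$ one deduces that $u$ has many neighbours, and iterating the eigenvalue equation one step further (looking at $\mu^2 x_u$) shows that the set $U$ of vertices within distance two of $u$ spans a graph with roughly $\Omega(kn)$ edges' worth of weighted mass. I would use this to extract either a large "book"-like structure around $u$ or to reduce to the case where $G$ is close to $S_{n,k}$ in edit distance.

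Next, I would run a \emph{stability} step: assuming $G$ does \emph{not} contain some tree $T\in\mathcal{T}$, push the spectral bound to conclude that $G$ is structurally very close to $S_{n,k}$, i.e.\ there is a set $K$ of $k$ vertices (a near-clique dominating most of $G$) such that $G-K$ has only $O(1)$ edges and almost all vertices outside $K$ are adjacent to all of $K$. The mechanism: if the "link" of a typical low-eigenvector vertex is small and the graph outside a $k$-set is essentially empty, then $\mu(G)\le\mu(S_{n,k})$ with equality characterization forcing $G=S_{n,k}$; conversely any deviation (an extra edge outside $K$, or a vertex outside $K$ missing a neighbour in $K$ compensated by edges elsewhere) can be shown to strictly increase the capacity to embed trees. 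This is where the hypotheses $k\ge 8$ and "$n$ sufficiently large" get used: they give enough room in the counting so that, e.g., having even one edge inside the independent-set part together with $k$ apex vertices already yields every tree in $\mathcal{T}$ of order $2k+3$ — one embeds the two "centers" of a diameter-$\le 4$ tree using the edge or using one apex vertex, its $\le k$ immediate children among the apices and remaining vertices, and the grandchildren among the independent set, with the excluded tree $S_{2,\ldots,2}$ being precisely the one configuration that cannot be so embedded because it needs $k+1$ independent degree-$2$ vertices none of which may coincide with an apex.

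The embedding lemma itself I would prove by hand: given that $G$ contains $S_{n,k}^{+}$ (or even just $S_{n,k}$ plus one suitably placed edge, or $S_{n,k}$ with an extra apex-like vertex), show directly that every $T\in\mathcal{T}$ of order $2k+3$ embeds. Write $T$ with center set of size $1$ or $2$; in the size-$2$ case $T$ is a "double star with pendant paths", so it has two adjacent high-degree vertices $a,b$ whose non-shared neighbours are leaves or have one further leaf each; map $a,b$ to the endpoints of the extra edge and everything else into $K\cup$(independent set). In the size-$1$ case $T$ is a "spider of depth $2$", $a$ central with children $c_1,\dots,c_r$ each carrying some leaves; the count $2k+3$ forces $r\le k+1$, and if $r\le k$ we map $a$ and its children into $K\cup\{$one independent vertex$\}$ and leaves outside, while $r=k+1$ with every $c_i$ carrying exactly one leaf is exactly $S_{2,\dots,2}$ — excluded. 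I expect the main obstacle to be the \emph{stability} step: quantitatively ruling out all "intermediate" graphs that are neither close to $S_{n,k}$ nor contain the target trees, i.e.\ showing the spectral radius genuinely drops below $\mu(S_{n,k})$ for every $T$-free graph that is not $S_{n,k}$. This likely requires a careful case analysis on the structure of $G-K$ and on vertices of small eigenvector weight, together with an inequality of the form $\mu(G)^2\le\mu(G)+2e(G)-\delta\cdot(\text{defect})$ estimated via $\mathbf{x}^\top A^2\mathbf{x}$, and is the step where the threshold $k\ge 8$ is pinned down.
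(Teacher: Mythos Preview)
Your proposal outlines a stability-plus-embedding strategy, which is \emph{not} how the paper proceeds and, as written, has a genuine gap.

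The paper's route is a direct structural analysis, not a stability argument. It applies the Gao--Hou lemma to $B=A(G)^2-(k-1)A(G)-k(n-k)I$ to locate a vertex $u$ with $B_u\ge 0$ (this is the column-sum analogue of your $\mu^2 x_u$ idea), obtains $d_G(u)\ge k+1$, and then runs a three-way case split on $d_G(u)$: small ($\le p-1$), medium ($p\le d_G(u)\le n/(k+2)$), and large ($>n/(k+2)$). In each case the tree $T$ is found \emph{directly} inside $\{u\}\cup N^1(u)\cup N^2(u)$, using the Tur\'an number for star forests (Lidick\'y--Liu--Palmer) and McLennan's theorem to force the star forest $F_T$ (the non-root part of $T$) to appear with its centres in the right layer. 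The whole argument is further split by the parameter $p'=\omega(F_T)$: one theorem handles $p'\le k-1$, two more handle the four exceptional trees with $p'=k$. No global ``$G$ is close to $S_{n,k}$'' statement is ever proved or used.

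Your plan has two concrete problems. First, the stability step is only asserted: you say one should show that a $T$-free $G$ with $\mu(G)\ge\mu(S_{n,k})$ is within $O(1)$ edits of $S_{n,k}$, but you give no mechanism for this beyond a vague $\mathbf{x}^\top A^2\mathbf{x}$ inequality, and you yourself flag it as ``the main obstacle''. In practice, carrying this out for each $T\in\mathcal{T}$ would require exactly the kind of case analysis on neighbourhoods and star-forest embeddings that the paper does anyway. Second, even granting stability, your embedding step does not follow: ``$G$ is within $O(1)$ edits of $S_{n,k}$'' does \emph{not} imply $G\supseteq S_{n,k}^+$, so your hand embedding (which assumes an extra edge in the independent part) need not apply. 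Indeed $S_{n,k}$ itself fails to contain $T_2$ and $T_3$ (centre plus $k$ degree-$2$ children cannot all be placed using only $k$ apex vertices), so the exclusion $G\neq S_{n,k}$ is doing real work there, and a graph that is $S_{n,k}$ minus one apex--independent edge plus one edge elsewhere is ``close'' to $S_{n,k}$ yet requires a separate argument. Your case split on the centre structure of $T$ also misses that the relevant parameter is the number $p'$ of \emph{non-leaf} children, not the total number of children $r$; trees with $p'=k$ are exactly the delicate boundary cases the paper isolates and treats individually.
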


The remainder of this paper is organized as follows. In the next section, we provide some additional terminology and auxiliary results that will be used in our proofs. In Sections~\ref{sec:trees results1} and~\ref{sec:trees2}, we state and prove three theorems which together imply Theorem~\ref{th:trees}.

\section{Preliminaries}\label{sec:trees-pre}

We begin with some additional terminology and notation. Let $G=(V(G),E(G))$ be a graph. For $u\in V(G)$, let $N^{d}(u)=\{v\in V(G)\colon\, d_{G}(v,u)=d\}$, where $d_{G}(v,u)$ is the distance between $u$ and $v$ in $G$. Let $d_{G}(u)$ be the degree of $u$ in $G$. We write $\omega(G)$ for the number of components of $G$. For a non-empty subset $U\subseteq V(G)$, let $G[U]$ be the subgraph of $G$ induced by $U$, $E(U)$ be the edge set of $G[U]$, and $e(U)=|E(U)|$. For two disjoint vertex sets $U, V\subseteq V(G)$, let $E(U,V)$ be the set of edges in $G$ with one end-vertex in $U$ and one end-vertex in $V$, and let $e(U,V)=|E(U,V)|$. Given two graphs $G$ and $H$ with $H\subseteq G$, let $G\setminus H$ be the subgraph of $G$ induced by $V(G)\setminus V(H)$.

\begin{figure}[htbp]
\begin{centering}
\includegraphics[scale=0.8]{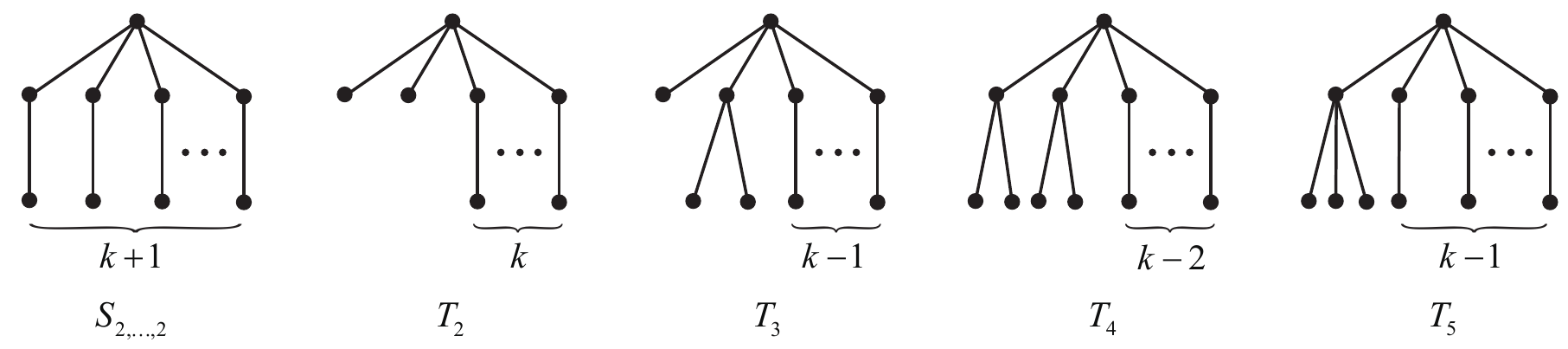}
\caption{The trees $S_{2,\ldots,2}, T_{2}, T_{3}, T_{4}$ and $T_{5}$ of order $2k+3$}
\label{trees1}
\end{centering}
\end{figure}

Let $\mathcal{T}_{2k+3}^{4}$ denote the set of all trees of order $2k+3$ and diameter at most four. It is well-known that a tree has exactly one center or exactly two adjacent centers. Recall that we consider $K_{1,0}=K_1$ as a trivial star. For any $T\in \mathcal{T}_{2k+3}^{4}$, the deletion of a center of $T$ results in a forest each component of which is a star. In the sequel, a tree $T\in \mathcal{T}_{2k+3}^{4}$ is always viewed as a rooted tree with the root at its center. Here we use the following convention. If $T$ has two adjacent centers, then we choose the root at the center $v\in V(T)$  that yields the largest number of components in $T-v$. Let $C_{T}$ (resp., $F_{T}$) be the star forest consisting of all stars (resp., all nontrivial stars) in the subgraph of $T$ obtained by deleting the root of $T$. Let $p\colonequals \omega(C_{T})$ and $p'\colonequals \omega(F_{T})$. Note that $p'\leq k+1$ for any $T\in \mathcal{T}_{2k+3}^{4}$. In particular, $S_{2,\ldots,2}$ is the unique tree in $\mathcal{T}_{2k+3}^{4}$ with $p'=k+1$, and there are exactly four trees $T_2, T_3, T_4, T_5$ in $\mathcal{T}_{2k+3}^{4}$ with $p'=k$ (see Figure~\ref{trees1}). Let $\mathcal{T}^{\ast}=\{T\in \mathcal{T}_{2k+3}^{4}\colon\, p'\leq k-1\}$. Moreover, note that $p+e(F_{T})=2k+2$.

We first prove our main result for the trees in $\mathcal{T}^{\ast}$ in Section~\ref{sec:trees results1}, and next for the four remaining trees in Section~\ref{sec:trees2}. The following lemma on matrices from \cite{GaoHou} is used in the set-up of all our proofs.

Given an $n\times n$ matrix $A$, let $A_{ij}$ be the $(i,j)$-th entry of $A$ for $1\leq i,j\leq n$.

\begin{lemma}\label{le:trees1}{\normalfont (\cite{GaoHou})}
Given $a,b\in \mathbb{Z}^{+}$ and an $n\times n$ nonnegative symmetric irreducible matrix $A$, let $\mu$ be the largest eigenvalue of $A$, and let $\mu'$ be the largest root of $f(x)=x^{2}-ax-b$. Define $B=f(A)=A^{2}-aA-bI$ and let $B_{j}=\sum_{i=1}^{n}B_{ij}$ {\rm(}$1\leq j\leq n${\rm)}. If $B_{j}\leq0$ for all $j\in \{1,2,\ldots,n\}$, then $\mu\leq \mu'$, with equality holding if and only if $B_{j}=0$ for all $j\in \{1,2,\ldots,n\}$.
\end{lemma}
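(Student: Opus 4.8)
The plan is to exploit the Perron--Frobenius theorem together with the fact that $B=f(A)$ is a polynomial in the symmetric matrix $A$, hence symmetric and sharing its eigenvectors with $A$. First I would invoke Perron--Frobenius: since $A$ is nonnegative, symmetric and irreducible, $\mu$ is an eigenvalue of $A$ with a strictly positive eigenvector $x=(x_1,\dots,x_n)^{\mathsf T}$ (so $x_j>0$ for all $j$) and $Ax=\mu x$; applying $f$ then gives $Bx=f(A)x=f(\mu)x$. Next I would reformulate the hypothesis: writing $\mathbf 1$ for the all-ones vector, the number $B_j=\sum_{i=1}^n B_{ij}$ is the $j$-th column sum of $B$, and since $B=B^{\mathsf T}$ it equals the $j$-th entry of $B\mathbf 1$; thus the assumption ``$B_j\le 0$ for all $j$'' is exactly the entrywise inequality $B\mathbf 1\le \mathbf 0$.

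The heart of the argument is to evaluate the scalar $\mathbf 1^{\mathsf T}Bx$ in two ways. On one hand, $\mathbf 1^{\mathsf T}Bx=\mathbf 1^{\mathsf T}(Bx)=f(\mu)\,\mathbf 1^{\mathsf T}x=f(\mu)\sum_j x_j$, and $\sum_j x_j>0$ because $x$ is strictly positive. On the other hand, using symmetry, $\mathbf 1^{\mathsf T}Bx=(B\mathbf 1)^{\mathsf T}x=\sum_j B_j x_j$, and every summand here is $\le 0$ since $B_j\le 0$ and $x_j>0$. Equating, $f(\mu)\sum_j x_j=\sum_j B_j x_j\le 0$, hence $f(\mu)\le 0$. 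Since $f(t)=t^2-at-b$ is an upward parabola whose two real roots are $\tfrac{a\pm\sqrt{a^2+4b}}{2}$, the smaller of which is negative (as $a,b>0$) while $\mu\ge 0$ (a nonnegative matrix has nonnegative spectral radius), the inequality $f(\mu)\le 0$ is equivalent to $\mu$ lying between the two roots, i.e.\ to $\mu\le\mu'$.

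For the equality characterization I would argue: if $\mu=\mu'$ then $f(\mu)=0$ (again using that $\mu\ge 0$ cannot equal the negative root), so $\sum_j B_j x_j=0$; being a sum of nonpositive terms $B_j x_j$ with each $x_j>0$, this forces $B_j=0$ for every $j$. Conversely, if all $B_j=0$ then $\sum_j B_j x_j=0$, whence $f(\mu)=0$ and $\mu=\mu'$.

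As for difficulties: the proof is short, and the only points needing care are (i) justifying that the Perron eigenvector is \emph{strictly} positive, which is precisely where irreducibility enters through Perron--Frobenius, and (ii) the passage from $f(\mu)\le 0$ to $\mu\le\mu'$, which relies on the observation that $\mu\ge 0$ so that $\mu$ cannot be cut off from below by the negative root of $f$. Everything else is the two-line double computation of $\mathbf 1^{\mathsf T}Bx$.
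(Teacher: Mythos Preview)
Your argument is correct; the two evaluations of $\mathbf 1^{\mathsf T}Bx$ together with the strict positivity of the Perron vector give exactly what is needed, and the equality case is handled cleanly. Note, however, that the paper does not actually supply a proof of this lemma: it is quoted from \cite{GaoHou} and used as a black box, so there is no ``paper's own proof'' to compare against. What you have written is precisely the standard Perron--Frobenius argument one finds for statements of this type, and it would serve perfectly well as a self-contained proof were one required.
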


We also frequently use the following known result on the Tur\'an number of a star forest.

\begin{lemma}\label{le:trees2}{\normalfont (\cite{LidiLiu})} Let $F=\bigcup_{i=1}^{t}K_{1,m_{i}}$ be a star forest with $m_{1}\geq m_{2}\geq\cdots\geq m_{t}$. Then for sufficiently large $m$, we have
$$ex(m,F)=\max\limits_{1\leq i\leq t}\left\{(i-1)(m-i+1)+{i-1 \choose 2}+\left\lfloor\frac{m_{i}-1}{2}(m-i+1)\right\rfloor\right\}.$$
\end{lemma}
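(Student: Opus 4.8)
\textbf{Proof plan for Theorem~\ref{th:trees}.}

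The plan is to apply Lemma~\ref{le:trees1} with a suitable choice of $a$ and $b$ that makes $\mu' = \mu(S_{n,k})$, and then to argue that the hypothesis $\mu(G)\geq\mu(S_{n,k})$ together with $G\neq S_{n,k}$ forces a large amount of structure in $G$ which is enough to embed any prescribed tree $T\in\mathcal{T}$. Concretely, since $S_{n,k}$ has spectral radius $\mu(S_{n,k})$ equal to the largest root of $x^2-(k-1)x-k(n-k)$, I would set $a=k-1$, $b=k(n-k)$, and $B=f(A(G))=A(G)^2-(k-1)A(G)-k(n-k)I$. The contrapositive of Lemma~\ref{le:trees1} then says: if $G$ is $T$-free for some $T\in\mathcal{T}$, and yet $\mu(G)\geq\mu(S_{n,k})$, there must exist a vertex $j$ with $B_j=\sum_i B_{ij}>0$, i.e. a vertex $u$ satisfying
\begin{equation*}
\sum_{v\in N(u)} d_G(v) + d_G(u) \;>\; (k-1)d_G(u) + k(n-k)
\end{equation*}
(reading $(A^2)_{ij}$ as the number of walks of length two and $B_j$ in terms of degrees of neighbours), and in the equality case $G=S_{n,k}$. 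So the proof splits naturally into the regime where some vertex has very large degree and forces dense local structure, versus the regime where the inequality is violated everywhere and one pushes toward the characterization $G=S_{n,k}$.

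Next I would do the embedding. Recall that a tree $T\in\mathcal{T}=\mathcal{T}_{2k+3}^4\setminus\{S_{2,\ldots,2}\}$, rooted at its center, decomposes after deleting the root into a star forest; write $C_T$ for this forest of $p=\omega(C_T)$ stars, $F_T$ for its nontrivial part with $p'=\omega(F_T)\leq k$ stars (with $p' = k+1$ excluded since that is exactly $S_{2,\ldots,2}$), and recall $p+e(F_T)=2k+2$. To embed $T$ it suffices to find a vertex $r$ in $G$ with $p$ neighbours that can serve as the star-centers, each of the $p'$ nontrivial ones having enough further private neighbours. The large-degree vertex $u$ produced above, together with the bound $\sum_{v\in N(u)}d_G(v) > (k-1)d_G(u)+k(n-k) - d_G(u)$, shows the second neighbourhood of $u$ is huge, so a greedy/defect-version argument (choosing star-centers one at a time among $N(u)$, each time discarding the $O(k)$ already-used vertices) yields the embedding, provided $d_G(u)$ is itself at least roughly $p'+O(k)$; this is where $k\geq 8$ and $n$ large enter. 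The exclusion of $S_{2,\ldots,2}$ is forced precisely because that tree needs $k+1$ star-centers all nontrivial, which would require essentially $d_G(u)\geq k+1$ simultaneously with each having a private neighbour, and $S_{n,k}$ itself—or the near-extremal graphs—cannot supply this; every other $T$ has $p'\leq k$ and $p\leq 2k+2-p'$, giving the slack.

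The main obstacle, and the bulk of the work, is the regime where $B_j\leq 0$ for every vertex but $G\neq S_{n,k}$: here Lemma~\ref{le:trees1} does not immediately apply in the strict form, so I must handle graphs that are spectrally extremal or near-extremal directly. The strategy is a stability-type analysis: from $\mu(G)\geq\mu(S_{n,k})$ and an Alon–Boppana/edge-counting estimate deduce that $G$ contains $S_{n,k-1}$ or a graph very close to $S_{n,k}$ as a subgraph (a set $K$ of about $k$ high-degree vertices adjacent to almost all of $V(G)$), and then invoke Lemma~\ref{le:trees2}: if the part of $G$ outside this near-dominating set were too sparse to contain the relevant star forest $F_T$, the edge count of $G$ would drop below that of $S_{n,k}$, contradicting the spectral hypothesis together with the fact that among graphs with $\mu\geq\mu(S_{n,k})$ and the wrong structure the number of edges cannot be much smaller than $e(S_{n,k})$. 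Quantitatively one compares $e(G\setminus K)$ against $ex(n-k,F_T)$ from Lemma~\ref{le:trees2}; since $e(F_T)=2k+2-p\leq 2k$ while the relevant $m_i$ in the star forest are small, the Turán number is $o(n^2)$ in the leading coefficient sense but the point is the \emph{exact} comparison forces an embeddable $F_T$ unless $G=S_{n,k}$. Making this last comparison tight—and ruling out the handful of sporadic near-extremal graphs other than $S_{n,k}$ itself—is the delicate step, and is presumably why the paper devotes separate sections and splits into three theorems (treating $\mathcal{T}^\ast$ with $p'\leq k-1$ and the four trees $T_2,\dots,T_5$ with $p'=k$ separately, since the latter have almost no slack).
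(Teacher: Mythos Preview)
Your proposal is not a proof of the stated lemma at all. Lemma~\ref{le:trees2} is a cited result from \cite{LidiLiu} on the Tur\'an number of star forests; the paper gives no proof for it, and your write-up makes no attempt to prove it either. Instead you have sketched a plan for Theorem~\ref{th:trees}. If the target was indeed Lemma~\ref{le:trees2}, the submission is simply off-topic.

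Treating your text as a plan for Theorem~\ref{th:trees}, the opening move is right: set $f(x)=x^{2}-(k-1)x-k(n-k)$, apply Lemma~\ref{le:trees1}, and extract a vertex $u$ with $B_{u}\geq 0$ (not $>0$; the equality case must be carried along). But the subsequent case split is structurally wrong. You propose a dichotomy between ``some vertex has very large degree'' and ``$B_{j}\leq 0$ for every vertex,'' and you identify the second as the main obstacle requiring a stability analysis. In fact that second regime is essentially vacuous: once $B_{u}\geq 0$ is in hand, a two-line computation (the paper's Claim~\ref{cl:trees1}) gives $d_{G}(u)\geq k+1$, and from then on the \emph{entire} argument works with this single vertex $u$. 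The paper's real case division is on the size of $d_{G}(u)$ --- namely $d_{G}(u)\leq p-1$, $p\leq d_{G}(u)\leq n/(k+2)$, and $d_{G}(u)>n/(k+2)$ --- and within the last case on $e(N^{1}(u),N^{2}(u))$. The hard work is detailed edge-counting inside $N^{1}(u)\cup N^{2}(u)$ (using Lemmas~\ref{le:trees2}, \ref{le:trees3}, \ref{le:trees4}), not a global stability argument comparing $e(G)$ to $e(S_{n,k})$. Your proposed ``stability-type analysis'' in the final paragraph --- finding a near-dominating set $K$ and comparing $e(G\setminus K)$ to $ex(n-k,F_{T})$ --- does not appear in the paper and, as stated, does not supply the tree embedding: even if $G\setminus K$ contains $F_{T}$, you have not arranged that the star centers are adjacent to a common root vertex, which is exactly what the local analysis around $u$ is designed to guarantee.
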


At some places we apply the following partial solution of the Erd\H{o}s-S\'{o}s Conjecture.

\begin{lemma}\label{le:trees3}{\normalfont (\cite{McLenn})} Every graph $G$ with $e(G)>(k-2)|V(G)|/2$ contains all trees with order $k$ and diameter at most four as subgraphs.
\end{lemma}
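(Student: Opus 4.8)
\textbf{Proof plan for Theorem~\ref{th:trees}.}

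The plan is to argue by contradiction: assume $G$ has order $n$ large, satisfies $\mu(G)\ge\mu(S_{n,k})$, is not equal to $S_{n,k}$, yet fails to contain some tree $T\in\mathcal{T}$. Since $\mathcal{T}$ splits into $\mathcal{T}^{\ast}$ (trees with $p'\le k-1$) and the four trees $T_2,T_3,T_4,T_5$ (with $p'=k$), I would prove Theorem~\ref{th:trees} by establishing the three theorems announced at the end of Section~\ref{sec:trees-intro}: one handling all $T\in\mathcal{T}^{\ast}$ simultaneously, and (because they sit at the boundary $p'=k$ and hence are harder to exclude) separate finer arguments for $\{T_2,T_3\}$ and for $\{T_4,T_5\}$. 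In every case the skeleton is the same. First, since $\mu(G)\ge\mu(S_{n,k})$ and $\mu(S_{n,k})$ is the largest root of $x^2-(k-1)x-k(n-k)$, I apply Lemma~\ref{le:trees1} with $a=k-1$, $b=k(n-k)$, and $B=A(G)^2-(k-1)A(G)-k(n-k)I$. If $G$ were connected, $A(G)$ is irreducible, so the hypothesis $\mu(G)\ge\mu(S_{n,k})$ forces $B_j>0$ for at least one $j$ (strictly, unless $G$ itself is $S_{n,k}$, which we can rule out via the equality clause together with a degree/structure argument), i.e. there is a vertex $u$ with
\[
\sum_{i}B_{iu}=\sum_{v\in N^{1}(u)}d_G(v)+\bigl(\text{walk terms}\bigr)-(k-1)d_G(u)-k(n-k)>0 .
\]
Unpacking $\sum_i (A^2)_{iu}=\sum_{v\sim u}d_G(v)$, this says $\sum_{v\in N^1(u)}d_G(v)$ is large, which translates into a lower bound on $e(N^1(u))+e(N^1(u),N^2(u))+\sum_{v\in N^2(u)} d_{N^1(u)}(v)$; roughly, the first two neighbourhoods of $u$ carry close to $k(n-k)$ edges. (If $G$ is disconnected, a standard reduction — adding edges between a maximum-spectral-radius component and the rest cannot decrease $\mu$, but the target trees of diameter $\le 4$ would then have to embed in one component — lets me assume connectivity, or reduces to the connected case; I would dispose of this at the outset.)

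Second, from this edge count I extract a ``dense core'' near $u$: a set $W$ of bounded size (morally $|W|\le k$, the vertices of high degree into the bulk) such that $G-W$ has few edges, while most of $V(G)$ is joined to $W$ in roughly the $S_{n,k}$ pattern. Quantitatively, the target is: there exist $W\subseteq N^1(u)\cup\{u\}$ with $|W|\le k$ such that $e(G-W)=O(1)$ (or at least $\le (k-2)n/2$ after the core is removed) and almost all vertices outside $W$ have all their neighbours in $W$. The tool for embedding $T$ given such structure is Lemma~\ref{le:trees2} (the Tur\'an number of a star forest) applied to $C_T$ and $F_T$: recall $p+e(F_T)=2k+2$, so a star forest on $\le k+1$ nontrivial stars with total $2k+2-p$ leaves. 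If $G-W$ has more than $ex(n-|W|,F_T)$ edges we directly find a copy of $F_T$ avoiding $W$, then use edges from $W$ to $V(G)\setminus W$ to attach the trivial-star parts and to route the center(s); because $|W|\ge$ (number of components of $C_T$ needs to be accommodated) and $n$ is huge, there is always room. So I would show: either $e(G-W)>ex(n-|W|,F_T)$ — done — or $e(G-W)$ is so small that, combined with the dense-core count above, $G$ is forced to be exactly $S_{n,k}$, contradicting $G\ne S_{n,k}$. This is the place where the exceptional tree $S_{2,\ldots,2}$ (with $p'=k+1$) and the four trees with $p'=k$ genuinely differ: for $p'\le k-1$ there is enough slack in $|W|$ versus $p'$ that the argument closes cleanly; for $p'=k$ one needs the refined structure of $T_2,\dots,T_5$ (e.g. how many trivial stars they have, whether the center is unique) to show the few leftover edges still suffice, and for $p'=k+1$ it simply fails — which is why $S_{2,\ldots,2}$ must be excluded.

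Third, the verification that ``$G$ forced to be $S_{n,k}$'' in the dense case: here I use the equality condition of Lemma~\ref{le:trees1}, namely $\mu(G)=\mu(S_{n,k})$ iff $B_j=0$ for all $j$. Knowing almost all vertices have neighbourhood exactly $W$ with $|W|=k$, and $B_j=0$ everywhere, a short eigenvector/counting argument (the Perron vector is constant on the independent set, $W$ induces a clique, every bulk vertex is adjacent to all of $W$) pins $G=S_{n,k}$. Combined with $G\ne S_{n,k}$ this is the contradiction, and since $T$ was an arbitrary tree in $\mathcal{T}$, Theorem~\ref{th:trees} follows; via the remark $\mu(S_{n,k})<\mu(S_{n,k}^+)$ it also yields Conjecture~\ref{co:trees1}(b) for all trees of diameter $\le 4$ except $S_{2,\ldots,2}$. \textbf{The main obstacle} I anticipate is the boundary analysis for $T_2,T_3,T_4,T_5$: for these $p'=k$ so the star-forest Tur\'an bound of Lemma~\ref{le:trees2} barely misses, and one must exploit the precise shape of each tree (number and sizes of its stars, and the two-center tie-breaking convention fixing the root) together with a careful accounting of the $O(1)$ leftover edges and the edges from $W$ — essentially a case analysis replacing the clean generic argument, and getting the constant $k\ge 8$ to suffice is where the bookkeeping is tightest.
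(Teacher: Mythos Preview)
Your proposal does not address Lemma~\ref{le:trees3} at all. That lemma is McLennan's resolution of the Erd\H{o}s--S\'os Conjecture for trees of diameter at most four; the present paper does not prove it but quotes it from \cite{McLenn} and uses it as a black-box ingredient (in Subcase~3.1 of Theorem~\ref{th:trees1}, in Claim~\ref{cl:trees4}, and in Subcase~2.1 of Theorem~\ref{th:trees2}). What you have written is instead a proof plan for Theorem~\ref{th:trees}, the paper's main result.

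Taking your sketch on its own terms as a plan for Theorem~\ref{th:trees}: the top-level architecture (split by $p'$ into $\mathcal{T}^{\ast}$ versus $\{T_2,\dots,T_5\}$; use Lemma~\ref{le:trees1} to locate a vertex $u$ with $B_u\ge 0$; convert this into edge-counts in $N^1(u)\cup N^2(u)$; embed $F_T$ via Lemma~\ref{le:trees2}) does match the paper. But the central mechanism you propose---extract a single dense core $W$ with $|W|\le k$ and run the dichotomy ``$e(G-W)>ex(n-|W|,F_T)$ or else $G=S_{n,k}$''---is not what the paper does, and is too coarse to close. The paper instead runs a three-way split on $d_G(u)$ (below $p$; between $p$ and $n/(k+2)$; above $n/(k+2)$) and, in the large-degree case, a further split on $e(N^1(u),N^2(u))$; each branch is handled by a different device (Lemma~\ref{le:trees4} to embed $F_T$ from many high-degree vertices in $N^1(u)$, or Lemma~\ref{le:trees3} applied to $L_u$, or Lemma~\ref{le:trees2} applied to $G[N^1(u)]$ followed by detailed counting of the residual edges in Claims~\ref{cl:trees6}--\ref{cl:trees7} and Fact~\ref{fa:trees1}). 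The equality clause of Lemma~\ref{le:trees1} is invoked only once, to exclude $d_G(u)=k$; elsewhere the contradictions are pure edge-counting, and the structural conclusion $G\subseteq S_{n,k}$ is reached only at the very end of the $\{T_2,T_3\}$ argument after several further claims. Your single dichotomy skips precisely the branch-by-branch work that makes the argument go through.
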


We also need the following auxiliary result, in which $F_T$ is defined as above.

\begin{lemma}\label{le:trees4} Let $T\in \mathcal{T}_{2k+3}^{4}$ with $p'\geq2$. Let $H=(X,Y)$ be a bipartite graph with $|X|\geq e(F_{T})$ and $|Y|\geq p'$. If there exist
vertices $y_{0},y_{1},\ldots,y_{p'-1}\in Y$ such that $d_{H}(y_{0})\geq e(F_{T})$ and $d_{H}(y_{i})\geq e(F_{T})-1$ for all $i\in \{1,2,\ldots,p'-1\}$, then $H$ contains a copy of $F_{T}$ such that all the centers of its components are contained in $Y$.
\end{lemma}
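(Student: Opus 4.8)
The plan is to build the copy of $F_T$ greedily, embedding one star at a time with its centre placed at one of $y_0,\dots,y_{p'-1}$; the only point requiring care is the order in which these centres are used. First I would write $F_T=\bigcup_{i=1}^{p'}K_{1,m_i}$ with every $m_i\ge 1$, so that $e(F_T)=\sum_{i=1}^{p'}m_i$. I would then assign the star $K_{1,m_1}$ to the centre $y_0$ and the star $K_{1,m_{i+1}}$ to the centre $y_i$ for $1\le i\le p'-1$, and embed these stars in the order $y_{p'-1},y_{p'-2},\dots,y_1,y_0$, so that the distinguished high-degree centre $y_0$ is treated last. Since $H$ is bipartite with parts $X$ and $Y$, every leaf picked during the process automatically lands in $X$ and the centres are exactly the prescribed vertices of $Y$; hence it suffices to guarantee that the stars come out pairwise vertex-disjoint, which is automatic as long as at every step we can pick enough as-yet-unused neighbours in $X$.

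The key step is a running count of how many vertices of $X$ have already been used. When the construction reaches $y_i$ with $1\le i\le p'-1$, the stars placed so far are exactly those rooted at $y_{i+1},\dots,y_{p'-1}$, so precisely $m_{i+2}+\cdots+m_{p'}$ vertices of $X$ are occupied; hence $y_i$ still has at least
\begin{align*}
d_H(y_i)-(m_{i+2}+\cdots+m_{p'}) &\ge e(F_T)-1-(m_{i+2}+\cdots+m_{p'})\\
&= m_1+\cdots+m_{i+1}-1\ \ge\ m_{i+1}
\end{align*}
free neighbours in $X$, the last inequality because $m_1+\cdots+m_i\ge i\ge 1$; this is enough to host $K_{1,m_{i+1}}$. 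When the construction finally reaches $y_0$, every other star is already placed, so $m_2+\cdots+m_{p'}=e(F_T)-m_1$ vertices of $X$ are occupied and $y_0$ retains at least $d_H(y_0)-(e(F_T)-m_1)\ge m_1$ free neighbours, enough for $K_{1,m_1}$. Since the total number of leaves used equals $\sum_i m_i=e(F_T)\le|X|$, the process never runs out of room and terminates with a copy of $F_T$ all of whose component centres lie in $Y$.

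I expect the only genuine subtlety to be this ordering of the centres. The centre $y_0$ is promised one extra unit of degree ($d_H(y_0)\ge e(F_T)$, as against $d_H(y_i)\ge e(F_T)-1$ for the others), and that surplus is exactly what is consumed at the last step, after $e(F_T)-m_1$ vertices of $X$ have already been taken; processing $y_0$ earlier can leave the final star short by a single vertex — this already happens when $p'=2$ — so deferring $y_0$ to the end is essential to the bookkeeping. No other ingredient seems to be needed, and the hypotheses on $T$ enter only through the fact that $F_T$ is a star forest.
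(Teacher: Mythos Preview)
Your proof is correct and follows essentially the same greedy embedding argument as the paper: build the stars one by one with centres at the $y_i$, saving the high-degree vertex $y_0$ for the final step so that its extra unit of degree absorbs the full count of previously used leaves. The only cosmetic difference is the labelling---the paper assigns $K_{1,m_i}$ to $y_i$ for $1\le i\le p'-1$ and $K_{1,m_{p'}}$ to $y_0$, processing $y_1,\dots,y_{p'-1},y_0$ in that order---but the counting is identical and the key point (handle $y_0$ last) is the same.
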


\begin{proof} Since $p'\geq2$, each component of $F_{T}$ has at most $e(F_{T})-1$ edges. Assume that $K_{1,m_{1}},K_{1,m_{2}},\ldots,K_{1,m_{p'}}$ are the components of $F_{T}$. We shall first choose disjoint sets $X_{1},X_{2},\ldots,$ $X_{p'-1}$ one-by-one in $X$ such that $y_{i}$ is completely joined to $X_{i}$ and $|X_{i}|=m_{i}$ for each $i\in \{1,2,\ldots,p'-1\}$.

Since $d_{H}(y_{1})\geq e(F_{T})-1\geq m_{1}$, we can find a set $X_{1}\subseteq X$ satisfying the requirement. Suppose that we have found the desired sets $X_{1},\ldots,X_{i-1}$ for some $1\leq i \leq p'-1$. Since $|\bigcup_{j=1}^{i-1}X_{j}|=\sum_{j=1}^{i-1}m_{j}=e(F_{T})-\sum_{j=i}^{p'}m_{j}\leq e(F_{T})-m_{i}-1\leq d_{H}(y_{i})-m_{i}$, we can find $X_{i}\subseteq X\setminus (\bigcup_{j=1}^{i-1}X_{j})$ such that $|X_{i}|=m_{i}$ and $y_{i}$ is completely joined to $X_{i}$. This shows that we can find the desired sets $X_{1},X_{2},\ldots,X_{p'-1}$.

Since $d_{H}(y_{0})\geq e(F_{T})$ and $|X|\geq e(F_{T})$, we can find a set $X_{p'}\subseteq X\setminus (\bigcup_{j=1}^{p'-1}X_{j})$ such that $|X_{p'}|=m_{p'}$ and $y_{0}$ is completely joined to $X_{p'}$. Thus $H$ contains a copy of $F_{T}$ such that all the centers of its components are contained in $Y$.
\end{proof}

We now have all the ingredients to prove our main result. We start with our proof for the trees in the set $\mathcal{T}^{\ast}$, i.e.,
with diameter at most four and $p'\leq k-1$.


\section{Trees with diameter at most four and $p'\leq k-1$}\label{sec:trees results1}
In this section, we are going to prove the following result for all trees in $\mathcal{T}^{\ast}$.

\begin{theorem}\label{th:trees1} For integers $k\geq8$ and $n$ sufficiently large, every graph $G$ of order $n$ with $\mu(G)\geq \mu(S_{n,k})$ contains all trees $T\in \mathcal{T}^{\ast}$.
\end{theorem}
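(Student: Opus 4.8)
The plan is to use Lemma~\ref{le:trees1} with a carefully chosen pair $(a,b)$ so that $\mu' = \mu(S_{n,k})$ is exactly the largest root of $x^2 - ax - b$. Since $S_{n,k} = K_k \vee \overline{K_{n-k}}$, its spectral radius $\mu(S_{n,k})$ is the largest root of a quadratic, and one checks that $\mu(S_{n,k})$ is the largest root of $x^2 - (k-1)x - k(n-k)$. So I would set $a = k-1$ and $b = k(n-k)$, define $B = A(G)^2 - (k-1)A(G) - k(n-k)I$, and argue by contradiction: suppose $G$ is $T$-free for some $T \in \mathcal{T}^*$. The hypothesis $\mu(G) \geq \mu(S_{n,k})$ together with Lemma~\ref{le:trees1} would be violated unless some column sum $B_j > 0$; so the real content is to show that if $G$ is $T$-free, then $B_j \leq 0$ for every $j$, and moreover to handle the equality case to force $G = S_{n,k}$ (though since the theorem as stated only needs $\mu(G) \geq \mu(S_{n,k})$ with no exclusion of $S_{n,k}$, and $S_{n,k}$ itself is easily seen to contain every $T \in \mathcal{T}^*$ when $p' \leq k-1$, I mainly need the strict/non-strict bookkeeping to reach a contradiction).

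**Estimating the column sums $B_j$.**

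The key computation: for a vertex $v_j$, the $j$-th column sum of $B$ is
$$B_j = \sum_{i} (A^2)_{ij} - (k-1)d_G(v_j) - k(n-k) = \sum_{u \in N(v_j)} d_G(u) - (k-1)d_G(v_j) - k(n-k).$$
Wait—$\sum_i (A^2)_{ij}$ counts walks of length $2$ ending at $v_j$, which equals $\sum_{u \in N(v_j)} d_G(u)$. So $B_j = \sum_{u \in N(v_j)} d_G(u) - (k-1)d_G(v_j) - k(n-k)$. The plan is to show this is $\leq 0$ for all $j$ using the $T$-freeness of $G$. The strategy splits on $d_G(v_j)$. If $d_G(v_j)$ is small (say $\leq$ some threshold like $2k$), then $\sum_{u \in N(v_j)} d_G(u) \leq d_G(v_j)\cdot(n-1)$, and a crude bound should give $B_j \leq 0$ provided $n$ is large — here I'd need $d_G(v_j)(n-1) - (k-1)d_G(v_j) - k(n-k) \leq 0$, which holds when $d_G(v_j) \leq k - O(1/n)$, i.e. $d_G(v_j) \leq k - 1$; for $d_G(v_j)$ between $k$ and $2k$ one needs a sharper count of $\sum_{u\in N(v_j)} d_G(u)$ using $T$-freeness. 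If $d_G(v_j)$ is large, the argument must exploit that a high-degree vertex together with its neighborhood, if the neighbors also have reasonably large degree, creates a tree in $\mathcal{T}^*$: this is exactly what Lemma~\ref{le:trees4} is designed for. Roughly, take $v_j$ as a candidate root; its neighborhood must contain the star forest $F_T$ rooted appropriately, and $T$-freeness forces the neighbors of $v_j$ to have bounded degree or bounded "local edge count," which caps $\sum_{u \in N(v_j)} d_G(u)$.

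**The heart of the argument: bounding neighborhood edges via Erdős–Sós.**

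I expect the main obstacle to be the case of a high-degree vertex $v_j$, where I must show $\sum_{u\in N(v_j)}d_G(u) \leq (k-1)d_G(v_j) + k(n-k)$. Write $S = N(v_j)$, $R = V(G) \setminus (S \cup \{v_j\})$. Then $\sum_{u\in S} d_G(u) = 2e(S) + e(S,R) + d_G(v_j)$ (each neighbor is adjacent to $v_j$). I would argue: (i) $e(S)$ must be small, because a dense $G[S]$ contains (by Lemma~\ref{le:trees3}, the McLennan partial Erdős–Sós) many small trees of diameter $\leq 4$, which combined with $v_j$ and a few more vertices builds $T$ — more precisely if $e(S) > (2k+1)|S|/2$ or so, then $G[S]$ contains a spanning-ish subtree large enough that attaching via $v_j$ yields $T$; and (ii) $e(S,R)$ is controlled because vertices in $R$ of high degree into $S$ would, together with $v_j$, give two "centers" worth of star forests, again producing $T$ via Lemma~\ref{le:trees4}. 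The star-forest Turán bound Lemma~\ref{le:trees2} enters when bounding how many edges $S$ can send to low-degree parts without creating $F_T$. Combining these, $\sum_{u\in S}d_G(u) \leq c_1 |S| + c_2 n$ for constants depending only on $k$, and since $|S| = d_G(v_j)$ and the coefficient of $n$ works out to at most $k(n-k)$ after absorbing lower-order terms into "$n$ sufficiently large," we get $B_j \leq 0$. Finally, tracking when equality can hold throughout forces $G[S \cup \{v_j\}]$ and the $S$–$R$ structure to be exactly that of $S_{n,k}$, but since $S_{n,k}$ contains every $T \in \mathcal{T}^*$ (as $p' \le k-1$ means $T$ embeds with its root and the $\leq k-1$ star-centers among the $k$ dominating vertices), no $T$-free graph attains equality, completing the contradiction. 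The delicate points will be the exact thresholds making the two Erdős–Sós/Turán applications fire simultaneously with the quadratic bookkeeping, and handling the intermediate degree range $k \leq d_G(v_j) \leq 2k$ separately.
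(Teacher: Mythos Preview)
Your setup is exactly right: the quadratic $f(x)=x^{2}-(k-1)x-k(n-k)$, the matrix $B=f(A(G))$, and the appeal to Lemma~\ref{le:trees1} are precisely how the paper begins. Note, however, that the paper uses the contrapositive orientation: rather than proving $B_j\le 0$ for every $j$, it fixes a single vertex $u$ with $B_u\ge 0$ (guaranteed by Lemma~\ref{le:trees1}) and from that \emph{lower} bound $\sum_{x\in N^1(u)}d_{L_u}(x)\ge (k-2)d_G(u)+k(n-k)$ constructs a copy of $T$ directly. This reorientation is not merely cosmetic: the constructive viewpoint is what makes the later case analysis tractable.

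There are two genuine gaps in your plan.

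\textbf{The intermediate degree range.} When $k+1\le d_G(u)\le p-1$ (which can occur since $p$ may be as large as $2k+1$), the vertex $u$ does not have enough neighbours to serve as the root of $T$. A ``sharper count of $\sum_{w\in N(u)}d_G(w)$'' cannot close this gap, because no edge-count alone forces $B_u\le 0$ here. The paper instead uses a Cauchy--Schwarz (second-moment) step,
\[
\sum_{y\in N^2(u)}d_{L_u}(y)^{2}\ \ge\ \frac{1}{|N^2(u)|}\Bigl(\sum_{y\in N^2(u)}d_{L_u}(y)\Bigr)^{2},
\]
to locate a vertex $x\in N^1(u)$ possessing many neighbours $y\in N^2(u)$ with $d_{L_u}(y)\ge 2k+2-p$; this $x$, not $u$, is then taken as the root of $T$. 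This change of root and the second-moment argument to find it are ideas absent from your outline.

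\textbf{The high-degree case.} Your proposed bound $e(S)\le (2k+1)|S|/2$ from Lemma~\ref{le:trees3} carries leading coefficient $2k+1$, whereas the inequality you need, $2e(S)+e(S,R)\le (k-2)d_G(u)+k(n-k)$, demands a coefficient of order $k-2$ on $|S|=d_G(u)$; since $d_G(u)$ can be linear in $n$, this factor-of-two slack is fatal. The paper does \emph{not} bound $e(S)$ from above. Instead, from $B_u\ge 0$ it obtains $e(G[N^1(u)])\ge (k-2)|N^1(u)|-k^2+k$ and uses this lower bound to (i) force $m_1\le 4$ via Lemma~\ref{le:trees3} applied to a smaller tree $T\setminus K_{1,m_1}$, (ii) force $\omega(F_T)=k-1$ via Lemma~\ref{le:trees2}, and (iii) only after pinning $F_T$ down this precisely, carry out a delicate edge count (Claims~\ref{cl:trees6} and~\ref{cl:trees7}) that finally yields a contradiction. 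None of these structural reductions on $T$ appear in your sketch, and without them the arithmetic does not close.
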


\begin{proof}
For a contradiction, suppose that $G$ is an $n$-vertex graph with $\mu(G)\geq \mu(S_{n,k})$ and there exists a tree $T\in \mathcal{T}^{\ast}$ such that $G$ contains no copy of
$T$. Since $S_{n,k}$ contains all trees in $\mathcal{T}^{\ast}$, we may assume that $G\neq S_{n,k}$. In addition, we have $p'\leq k-1$ and $p+p'\leq p+e(F_{T})=2k+2$.

Let $f(x)=x^{2}-(k-1)x-k(n-k)$. Note that $\mu(S_{n,k})$ is the largest root of $f(x)$. Let $B=f(A(G))$ and $B_{v}=\sum_{i=1}^{n}B_{iv}$ for any $v\in V(G)$. By Lemma~\ref{le:trees1} and since $\mu(G)\geq \mu(S_{n,k})$, there exists a vertex $u\in V(G)$ with $B_{u}\geq 0$. Let $L_{u}$ be the graph with vertex set $N^{1}(u)\cup N^{2}(u)$ and edge set $E(N^{1}(u))\cup E(N^{1}(u),N^{2}(u))$. By the definition of $B$, we have
\begin{equation}\label{eq:trees1}
B_{u}=\sum\limits_{x\in N^{1}(u)}d_{L_{u}}(x)-(k-2)d_{G}(u)-k(n-k).
\end{equation}

We complete the proof by first proving two claims and then distinguishing three cases based on the degree of $u$.

\begin{claim}\label{cl:trees1} $d_{G}(u)\geq k+1$.
\end{claim}

\begin{proof}
Since $d_{L_{u}}(x)\leq n-2$ for all $x\in N^{1}(u)$, by equality (\ref{eq:trees1}), we have
\begin{equation}\label{eq:trees2}
0\leq B_{u}\leq d_{G}(u)(n-2)-(k-2)d_{G}(u)-k(n-k)=(d_{G}(u)-k)(n-k).
\end{equation}
Thus $d_{G}(u)\geq k$. If $d_{G}(u)\geq k+1$, then we are done. If $d_{G}(u)=k$, then inequality (\ref{eq:trees2}) implies $0=B_{u}=d_{G}(u)(n-2)-(k-2)d_{G}(u)-k(n-k)$. In this case, $d_{L_{u}}(x)=n-2$ for all $x\in N^{1}(u)$. Then $G$ contains $S_{n,k}$ as a subgraph, implying that $G$ contains a copy of $T$, a contradiction.
\end{proof}

By Claim~\ref{cl:trees1}, we have $B_{u}\geq0$ and $d_{G}(u)\geq k+1$. Then
\begin{align}\label{eq:trees3}
&\sum_{x\in N^{1}(u)}~\sum_{y\in N^{1}(x)\cap N^{2}(u)}d_{L_{u}}(y)=\sum_{y\in N^{2}(u)}d_{L_{u}}^{2}(y)\nonumber\\
\geq&~\frac{1}{|N^{2}(u)|}\Bigg(\sum_{y\in N^{2}(u)}d_{L_{u}}(y)\Bigg)^{2}\geq\frac{1}{|N^{2}(u)|}\Bigg(\sum_{x\in N^{1}(u)}d_{L_{u}}(x)-|N^{1}(u)|(|N^{1}(u)|-1)\Bigg)^{2}\nonumber\\
>&~\frac{1}{n}\left(B_{u}+(k-2)d_{G}(u)+k(n-k)-d_{G}(u)(d_{G}(u)-1)\right)^{2}\nonumber\\
\geq&~\frac{1}{n}\left(k(n-k)-d_{G}(u)(d_{G}(u)-k+1)\right)^{2}.
\end{align}

\begin{claim}\label{cl:trees2} $p'\geq1$.
\end{claim}

\begin{proof}
Suppose that $p'=0$, that is, $T=K_{1,2k+2}$. If $d_{G}(u)\geq 2k+2$, then $G$ contains a copy of $T$. Thus $d_{G}(u)\leq 2k+1$. Then by inequality (\ref{eq:trees3}), we have $\sum_{x\in N^{1}(u)}\sum_{y\in N^{1}(x)\cap N^{2}(u)}d_{L_{u}}(y)\geq\frac{1}{n}\left(kn-k^{2}-(2k+1)(2k+1-k+1)\right)^{2}=\Omega(n)$.

On the other hand, since $G$ contains no copy of $T$, we have $|N^{1}(x)|\leq2k+1$ for all $x\in N^{1}(u)$. Then $\sum_{x\in N^{1}(u)}\sum_{y\in N^{1}(x)\cap N^{2}(u)}d_{L_{u}}(y)\leq\sum_{x\in N^{1}(u)}\sum_{y\in N^{1}(x)\cap N^{2}(u)}|N^{1}(u)|\leq 2k|N^{1}(u)|^{2}\leq2k(2k+1)^{2}=o(n)$. This contradiction completes the proof of Claim~\ref{cl:trees2}.
\end{proof}

We divide the rest of the proof into three cases: (i) $d_{G}(u)\leq p-1$, (ii) $p\leq d_{G}(u)\leq \frac{n}{k+2}$ and (iii) $d_{G}(u)>\frac{n}{k+2}$.

\medskip\noindent
{\bf Case 1.} $d_{G}(u)\leq p-1$.
\vspace{0.05cm}

For $x\in N^{1}(u)$, let $C(x)=\{y\in N^{1}(x)\cap N^{2}(u)\colon\, d_{L_{u}}(y)\geq2k+2-p\}$. We need the following claim in order to complete this case.

\begin{claim}\label{cl:trees3} There exists a vertex $x\in N^{1}(u)$ with $|C(x)|\geq p'-1$ and $|N^{1}(x)\cap N^{2}(u)|\geq p-1$.
\end{claim}

\begin{proof}
First suppose that $|C(x)|<p'-1$ for every $x\in N^{1}(u)$. Then
\begin{align*}\label{eq:trees3}
\sum_{x\in N^{1}(u)}~\sum_{y\in N^{1}(x)\cap N^{2}(u)}d_{L_{u}}(y)<&~d_{G}(u)((p'-1)d_{G}(u)+(n-1-d_{G}(u)-p'+1)(2k+1-p))\\
=&~d_{G}(u)(n(2k+1-p)-d_{G}(u)(2k-p-p'+2)-p'(2k+1-p))\\
\leq&~d_{G}(u)(2k+1-p)n\leq d_{G}(u)(2k-d_{G}(u))n\leq(k^{2}-1)n.
\end{align*}

On the other hand, by inequality (\ref{eq:trees3}), we have
\begin{align*}
&\sum_{x\in N^{1}(u)}~\sum_{y\in N^{1}(x)\cap N^{2}(u)}d_{L_{u}}(y)\geq\frac{1}{n}\left(k(n-k)-d_{G}(u)(d_{G}(u)-k+1)\right)^{2}\\
\geq&~\frac{1}{n}\left(k(n-k)-(p-1)(p-k)\right)^{2}\geq\frac{1}{n}\left(k(n-k)-2k(k+1)\right)^{2}=\frac{1}{n}\left(kn-3k^{2}-2k\right)^{2}.
\end{align*}
For sufficiently large $n$, we have that $(k^{2}-1)n<\frac{1}{n}\left(kn-3k^{2}-2k\right)^{2}$, contradicting the combination of the above two inequalities.
Thus there exists a vertex $x\in N^{1}(u)$ with $|C(x)|\geq p'-1$.

Next suppose that $|N^{1}(x)\cap N^{2}(u)|\leq p-2$ for every $x\in N^{1}(u)$ with $|C(x)|\geq p'-1$. Then
\begin{align*}
&\sum_{x\in N^{1}(u)}~\sum_{y\in N^{1}(x)\cap N^{2}(u)}d_{L_{u}}(y)\\
=&\sum_{x\in N^{1}(u),|C(x)|\geq p'-1}~\sum_{y\in N^{1}(x)\cap N^{2}(u)}d_{L_{u}}(y)+\sum_{x\in N^{1}(u),|C(x)|\leq p'-2}~\sum_{y\in N^{1}(x)\cap N^{2}(u)}d_{L_{u}}(y)\\
\leq&\sum_{x\in N^{1}(u),|C(x)|\leq p'-2}((p'-2)|N^{1}(u)|+(n-1-|N^{1}(u)|-p'+2)(2k+1-p))\\
&+\sum_{x\in N^{1}(u),|C(x)|\geq p'-1}(p-2)|N^{1}(u)|\\
\leq&\sum_{x\in N^{1}(u),|C(x)|\leq p'-2}(2k+1-p)n+\sum_{x\in N^{1}(u),|C(x)|\geq p'-1}(p-2)(p-1).
\end{align*}

By Claim~\ref{cl:trees2}, we have $p\leq 2k+1$. If $2k+1-p>0$, then
\begin{align*}
&\sum_{x\in N^{1}(u),|C(x)|\leq p'-2}(2k+1-p)n+\sum_{x\in N^{1}(u),|C(x)|\geq p'-1}(p-2)(p-1)\\
\leq~&(p-2)(p-1)+(|N^{1}(u)|-1)(2k-|N^{1}(u)|)n\leq(p-2)(p-1)+(k^{2}-k)n,
\end{align*}
which contradicts $\sum_{x\in N^{1}(u)}\sum_{y\in N^{1}(x)\cap N^{2}(u)}d_{L_{u}}(y)\geq \frac{1}{n}\left(kn-3k^{2}-2k\right)^{2}$.

If $2k+1-p=0$, then
\begin{align*}
&\sum_{x\in N^{1}(u),|C(x)|\leq p'-2}(2k+1-p)n+\sum_{x\in N^{1}(u),|C(x)|\geq p'-1}(p-2)(p-1)\\
\leq~&(p-2)(p-1)|N^{1}(u)|\leq(p-2)(p-1)^{2},
\end{align*}
which contradicts $\sum_{x\in N^{1}(u)}\sum_{y\in N^{1}(x)\cap N^{2}(u)}d_{L_{u}}(y)\geq \frac{1}{n}\left(kn-3k^{2}-2k\right)^{2}$.
\end{proof}

By Claim~\ref{cl:trees3}, there exists a vertex $x\in N^{1}(u)$ with $|C(x)|\geq p'-1$ and $|N^{1}(x)\cap N^{2}(u)|\geq p-1$. Note that for any $y\in C(x)$, $d_{L_{u}-x}(y)\geq 2k+1-p=e(F_{T})-1$. Moreover, since $k+1\leq d_{G}(u)\leq p-1$, we have $p\geq k+2$. Thus $e(F_{T})=2k+2-p\leq k\leq |N^{1}(u)\setminus \{x\}|$.

If $p'\geq2$, then by Lemma~\ref{le:trees4} the graph $(X,Y)$ with $X=N^{1}(u)\setminus \{x\}$ and $Y=\{u\}\cup C(x)$ contains a copy of $F_{T}$. Together with $x$ and $p-p'$ vertices of $(N^{1}(x)\cap N^{2}(u))\setminus Y$ this copy of $F_{T}$ forms a copy of $T$, a contradiction. Thus $p'=1$. Now there exists a copy of $T$ with center $x$ such that $F_{T}$ is a star centered at $u$ and $C_{T}\setminus F_{T}$ consists of $p-1$ vertices in $N^{1}(x)\cap N^{2}(u)$. This contradiction completes the proof for Case 1.

\medskip\noindent
{\bf Case 2.} $p\leq d_{G}(u)\leq \frac{n}{k+2}$.
\vspace{0.05cm}

Let $C=\{x\in N^{1}(u)\colon\, d_{L_{u}}(x)\geq2k+1\}$. We claim that $|C|\geq p'$. Otherwise, if $|C|\leq p'-1\leq k-2$, then by equality (\ref{eq:trees1}), we have
\begin{align*}
B_{u}&=\sum\limits_{x\in N^{1}(u)}d_{L_{u}}(x)-(k-2)d_{G}(u)-k(n-k)\\
&\leq~(k-2)(n-2)+2k(d_{G}(u)-k+2)-(k-2)d_{G}(u)-k(n-k)\\
&=~d_{G}(u)(k+2)-2n-k^{2}+2k+4<0,
\end{align*}
contradicting the fact that $B_{u}\geq0$.

We choose $C'\subseteq C$ with $|C'|=p'$ and $C''\subseteq N^{1}(u)\setminus C'$ with $|C''|=p-p'$. Let $X=(N^{1}(u)\cup N^{2}(u))\setminus (C'\cup C'')$. Note that for any $y\in C'$, we have that $y$ has at least $d_{L_{u}}(y)-|(C'\cup C'')\setminus \{y\}|\geq e(F_{T})$ neighbors in $X$. Let $C'=Y$. Then by Lemma~\ref{le:trees4}, there is a copy of
$F_{T}$ between $X$ and $Y$. Together with $u$ and $C''$ this copy of $F_{T}$ forms a copy of $T$, a contradiction.

\medskip\noindent
{\bf Case 3.} $d_{G}(u)>\frac{n}{k+2}$.
\vspace{0.05cm}

We consider two subcases based on the number of edges between $N^{1}(u)$ and $N^{2}(u)$.

\medskip\noindent
{\bf Subcase 3.1.} $e(N^{1}(u),N^{2}(u))>(k+2)d_{G}(u)+2k|N^{2}(u)|-k(n-k)$.
\vspace{0.05cm}

By equality (\ref{eq:trees1}) and since $B_{u}\geq0$, we have $\sum_{x\in N^{1}(u)}d_{L_{u}}(x)\geq(k-2)d_{G}(u)+k(n-k)$. Then
\begin{align*}
e(L_{u})&=~\frac{1}{2}\Bigg(\sum_{x\in N^{1}(u)}d_{L_{u}}(x)+e(N^{1}(u),N^{2}(u))\Bigg)\\
&>~\frac{1}{2}((k-2)d_{G}(u)+k(n-k)+(k+2)d_{G}(u)+2k|N^{2}(u)|-k(n-k))\\
&=~k(d_{G}(u)+|N^{2}(u)|)=\frac{2k+2-2}{2}|V(L_{u})|.
\end{align*}

By Claim~\ref{cl:trees2}, there is a vertex $w$ in $T$ which is at distance 2 from the center of $T$. Let $w'$ be the neighbor of $w$ in $T$ and let $T'=T-w$. Then $T'$ is a tree of order $2k+2$ and diameter at most four. By Lemma~\ref{le:trees3}, $L_u$ contains a copy of $T'$. If the center of such $T'$ is contained in $N^1(u)$, then we delete $w'$ and all the leaves adjacent to $w'$ in $T'$, and add $u$ and some vertices of $N^1(u)$, so that we get a copy of $T$ in $G$, a contradiction. If the center of such $T'$ is contained in $N^2(u)$, then $w'\in N^1(u)$ by the definition of $L_u$. In this case, we add $u$ and the edge $uw'$ to $T'$. This way we obtain a copy of $T$ in $G$, a contradiction.

\medskip\noindent
{\bf Subcase 3.2.} $e(N^{1}(u),N^{2}(u))\leq(k+2)d_{G}(u)+2k|N^{2}(u)|-k(n-k)$.
\vspace{0.05cm}

Let $G'=G[N^{1}(u)]$. In this case, we have
\begin{align*}
e(G')&=~\frac{1}{2}\Bigg(\sum_{x\in N^{1}(u)}d_{L_{u}}(x)-e(N^{1}(u),N^{2}(u))\Bigg)\\
&\geq~\frac{1}{2}((k-2)d_{G}(u)+k(n-k)-(k+2)d_{G}(u)-2k|N^{2}(u)|+k(n-k))\\
&=~-2d_{G}(u)+k(n-|N^{2}(u)|)-k^{2}\geq -2d_{G}(u)+k(d_{G}(u)+1)-k^{2}\\
&=~(k-2)|V(G')|-k^{2}+k.
\end{align*}
Recall that $F_{T}$ is a star forest, say $F_{T}=\bigcup_{i=1}^{p'}K_{1,m_{i}}$ with $m_{1}\geq m_{2}\geq\cdots\geq m_{p'}$. We first prove two claims on the structure of $F_{T}$ and then distinguish another two subcases.

\begin{claim}\label{cl:trees4} $m_{1}\leq4$.
\end{claim}

\begin{proof}
Suppose $m_{1}\geq5$. Let $T'=T\setminus K_{1,m_{1}}$. Then $T'$ is a tree with at least $2k+3-(m_1+1)\leq 2k-3$ vertices and diameter at most four. By Lemma~\ref{le:trees3} and
since $e(G')\geq (k-2)|V(G')|-k^{2}+k>\frac{2k-5}{2}|V(G')|$, there exists a copy of $T'$ in $G'$. Together with $u$ and $m_{1}$ vertices of $N^{1}(u)$ this copy of $T'$ forms a copy of $T$ in $G$, a contradiction.
\end{proof}

\begin{claim}\label{cl:trees5} $\omega(F_{T})=k-1$.
\end{claim}

\begin{proof}
Suppose $\omega(F_{T})\leq k-2$. By Claim~\ref{cl:trees4}, we may assume that $m_{1}=\cdots=m_{a}=4$, $m_{a+1}=\cdots=m_{a+b}=3$, $m_{a+b+1}=\cdots=m_{a+b+c}=2$ and $m_{a+b+c+1}=\cdots=m_{p'}=1$, where $0\leq a,b,c\leq p'\leq k-2$. Moreover, $0\leq a\leq\lfloor\frac{2k+2}{5}\rfloor$, $0\leq b\leq\lfloor\frac{2k+2}{4}\rfloor$, $0\leq c\leq\lfloor\frac{2k+2}{3}\rfloor$, $0\leq a+b\leq\lfloor\frac{2k+2}{4}\rfloor$ and $0\leq a+b+c\leq\lfloor\frac{2k+2}{3}\rfloor$. By Lemma~\ref{le:trees2}, for sufficiently large $N$, we have $ex(N,F_{T})=\max\{(a-1)(N-a+1)+{a-1 \choose 2}+\lfloor \frac{3}{2}(N-a+1)\rfloor,\ (a+b-1)(N-a-b+1)+{a+b-1 \choose 2}+(N-a-b+1),\ (a+b+c-1)(N-a-b-c+1)+{a+b+c-1 \choose 2}+\lfloor \frac{1}{2}(N-a-b-c+1)\rfloor,\ (p'-1)(N-p'+1)+{p'-1 \choose 2}\}<(k-2)N-k^{2}+k$.

Since $e(G')\geq (k-2)|V(G')|-k^{2}+k$, $G'$ contains a copy of $F_{T}$, which together with $u$ and $p-p'$ vertices of $N^{1}(u)$ forms a copy of $T$, a contradiction.
\end{proof}

By Claims~\ref{cl:trees4} and~\ref{cl:trees5}, we have $\omega(F_{T})=k-1$ and $m_{1}\leq4$. Since $k\geq8$, $F_{T}$ contains at least one copy of $K_{1,1}$. Thus $m_{p'}=1$. Let $F_{T}^{\ast}=F_{T}\setminus K_{1,m_{p'}}=\bigcup_{i=1}^{p'-1}K_{1,m_{i}}$. By Lemma~\ref{le:trees3}, we have $ex(N,F_{T}^{\ast})\leq(p'-2)(N-p'+2)+{p'-2 \choose 2}$. Since $e(G')\geq (k-2)|V(G')|-k^{2}+k$, $G'$ contains a copy of $F_{T}^{\ast}$. Let $x_{1},\ldots,x_{p'-1}$ be the centers of $K_{1,m_{1}},\ldots,K_{1,m_{p'-1}}$ in this $F_{T}^{\ast}$, respectively. Let $Y=V(F_{T}^{\ast})\setminus \{x_{1},\ldots,x_{p'-1}\}$ and $W=N^{1}(u)\setminus V(F_{T}^{\ast})$. For the final part of our proof, we distinguish two subcases: (i) $p=p'$ and (ii) $p>p'$.

\medskip\noindent
{\bf Subcase 3.2.1.} $p=p'$.
\vspace{0.05cm}

We need one more claim for our proof in this subcase.

\begin{claim}\label{cl:trees6} $e(Y,W)\leq k+18$.
\end{claim}

\begin{proof}
For any $y\in Y$, we may assume that $y$ is contained in $K_{1,m_{j}}$ in $F_{T}^{\ast}$. If $2\leq m_{j}\leq4$, then $e(\{y\},W)\leq 3$. Otherwise there is a $K_{1,m_{j}}$ between $y$ and $W$, and there is a $K_{1,1}$ between $x_{j}$ and $Y$, which implies that $G'$ contains a copy of $F_{T}$ and thus $G$ contains a copy of $T$.

If $m_{j}=1$, then we may assume that  $e(\{x_{j}\},W)\geq e(\{y\},W)$ without loss of generality. Then $e(\{y\},W)\leq 1$. Otherwise there is a $2K_{1,1}$ between $\{x_{j},y\}$ and $W$, which implies that $G'$ contains a copy of $F_{T}$ and thus $G$ contains a copy of $T$.

From the above arguments, we obtain that $e(Y,W)\leq k+18$.
\end{proof}

In order to avoid a copy of $T$ in $G$, there is no copy of $F_{T}$ in $G'$. Thus there is no edge within $W$. Moreover, for any $w\in W$, we have $e(\{w\},\{x_{1},\ldots,x_{p'-1}\})\leq p'-2$. Otherwise there exists a copy of $T$ centered at $w$ consisting of $F_{T}^{\ast}$, the edge $wu$, and an edge between $u$ and $N^{1}(u)$. Thus
\begin{align*}
e(G')&=~e(G[Y\cup\{x_{1},\ldots,x_{p'-1}\}])+e(G[W])+e(Y,W)+e(\{x_{1},\ldots,x_{p'-1}\},W)\\
&\leq~{2k \choose 2}+k+18+(p'-2)|W|\leq 2k^{2}+18+(k-3)(|V(G')|-2k)\\
&=~(k-3)|V(G')|+6k+18,
\end{align*}
contradicting $e(G')\geq (k-2)|V(G')|-k^{2}+k$.

\medskip\noindent
{\bf Subcase 3.2.2.} $p>p'$.
\vspace{0.05cm}

In this case, we have $2k-4\leq |X\cup Y|\leq2k-1$ and $|N^{1}(u)|-2k+1\leq |W|\leq |N^{1}(u)|-2k+4$. 
Similarly as in the proof of Claim~\ref{cl:trees6}, we can deduce that $e(Y,W)\leq k+9$. Moreover, there is no edge within $W$.

We claim that there exist at least $\frac{|W|}{2}$ vertices in $W$ such that each vertex is completely joined to $X$. Indeed, if not, then
\begin{align*}
e(G')&\leq~{|X\cup Y| \choose 2}+e(G[W])+e(Y,W)+e(X,W)\\
&\leq~{2k-1 \choose 2}+k+9+\left(\frac{|W|}{2}-1\right)|X|+\left(\frac{|W|}{2}+1\right)(|X|-1)\\
&=~|W||X|+2k^{2}-2k+9-\frac{|W|}{2}\leq (k-2)|W|+2k^{2}-2k+9-\frac{|N^{1}(u)|-2k+1}{2}\\
&\leq~(k-2)(|N^{1}(u)|-2k+4)+2k^{2}-2k+9-\frac{|N^{1}(u)|-2k+1}{2},
\end{align*}
contradicting $e(G')\geq (k-2)|V(G')|-k^{2}+k$. Thus there exists a subset $W'\subseteq W$ with $|W'|=\frac{|W|}{2}$ which is completely joined to $X$. This implies that there exists a copy of $F_{T}^{\ast}$ between $X$ and $W'$ with centers $x_{1},\ldots,x_{p'-1}$. In order to avoid a copy of $F_{T}$ with centers in $N^{1}(u)$, we have $e(G[Y])=0$, $e(Y,W)=0$ and $e(Y\cup W, V(G)\setminus (N^{1}(u)\cup\{u\}))=0$. Thus all edges within $L_{u}$ have an end-vertex in $X$. We need the following final claim in order to complete our proof of the theorem.

\begin{claim}\label{cl:trees7} The following statements hold.
\begin{itemize}
\item[{\rm (i)}] $|N^{1}(u)\cup N^{2}(u)|>n-1-\frac{k+1}{2}$.
\item[{\rm (ii)}] $e(N^{1}(u),N^{2}(u))>(k+2)d_{G}(u)+2k|N^{2}(u)|-k(n-k)-k^{2}-k$.
\end{itemize}
\end{claim}

\begin{proof}
If one of the above statements does not hold, then $e(G')=\frac{1}{2}(\sum_{x\in N^{1}(u)}d_{L_{u}}(x)-e(N^{1}(u),N^{2}(u)))$ $\geq (k-2)|V(G')|-\frac{k^{2}}{2}+\frac{3k}{2}$. However, $e(G')\leq {|X| \choose 2}+|X|(N^{1}(u)-|X|)=(k-2)|V(G')|-\frac{k^{2}}{2}+\frac{3k}{2}-1$, a contradiction.
\end{proof}

Using Claim~\ref{cl:trees7} (i) to obtain the strict inequality in the fifth step below, we have
\begin{align*}
&~e(N^{1}(u),N^{2}(u))\leq|X||N^{2}(u)|=(k-2)|N^{2}(u)|\\
=&~(k-2)|N^{2}(u)|+(k+2)|N^{2}(u)|+(k+2)d_{G}(u)-k(n-k)-k^{2}-k\\
&-(k+2)|N^{2}(u)|-(k+2)d_{G}(u)+k(n-k)+k^{2}+k\\
=&~2k|N^{2}(u)|+(k+2)d_{G}(u)-k(n-k)-k^{2}-k-(k+2)(|N^{2}(u)|+d_{G}(u))+kn+k\\
<&~2k|N^{2}(u)|+(k+2)d_{G}(u)-k(n-k)-k^{2}-k-(k+2)\left(n-1-\frac{k+1}{2}\right)+kn+k\\
=&~2k|N^{2}(u)|+(k+2)d_{G}(u)-k(n-k)-k^{2}-k-2n+\frac{k^{2}}{2}+\frac{7k}{2}+3,
\end{align*}
contradicting Claim~\ref{cl:trees7} (ii). This completes the proof of Theorem~\ref{th:trees1}.
\end{proof}


\section{Trees with diameter at most four and $p'=k$}\label{sec:trees2}

In this section, we shall prove Theorem~\ref{th:trees} for all trees with diameter at most four and $p'=k$. Recall that there are exactly four trees $T_2, T_3, T_4, T_5$ of order $2k+3$ with diameter at most four and $p'=k$ (see Figure~\ref{trees1}). We first consider the trees $T_4$ and $T_5$.

\begin{theorem}\label{th:trees2} For integers $k\geq4$ and $n$ sufficiently large, every graph $G$ of order $n$ with $\mu(G)\geq \mu(S_{n,k})$ contains all trees $T\in \{T_{4},T_{5}\}$.
\end{theorem}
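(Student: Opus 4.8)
The plan is to follow the proof of Theorem~\ref{th:trees1} almost verbatim, with the same three cases on $d_G(u)$; only two places need genuinely new input. Suppose, for a contradiction, that $G$ is an $n$-vertex graph with $\mu(G)\ge\mu(S_{n,k})$, $G\ne S_{n,k}$, and $G$ contains no copy of some $T\in\{T_4,T_5\}$. Here $p'=k$, with $(p,e(F_T))=(k+1,k+1)$ for $T_4$ and $(k+2,k)$ for $T_5$; in particular $F_{T_5}=kK_{1,1}$ and $F_{T_4}=K_{1,2}\cup(k-1)K_{1,1}$. As before set $f(x)=x^{2}-(k-1)x-k(n-k)$, $B=f(A(G))$, use Lemma~\ref{le:trees1} to fix $u$ with $B_u\ge0$, form $L_u$, and record $B_u=\sum_{x\in N^{1}(u)}d_{L_u}(x)-(k-2)d_G(u)-k(n-k)\ge0$. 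The argument of Claim~\ref{cl:trees1} gives $d_G(u)\ge k+1$ unchanged, except that ``$d_G(u)=k$'' now forces $S_{n,k}\subseteq G$ and hence, as $G\ne S_{n,k}$, $S_{n,k}^{+}\subseteq G$, which already contains both $T_4$ and $T_5$ (each has a $k$-set whose removal leaves at most one edge). There is nothing to prove in place of Claim~\ref{cl:trees2}, since $p'=k\ge 4$.

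\textbf{The easy cases.} Now split on $d_G(u)$ exactly as in Theorem~\ref{th:trees1}: (i) $d_G(u)\le p-1$; (ii) $p\le d_G(u)\le n/(k+2)$; (iii) $d_G(u)>n/(k+2)$. Case~(i): combined with $d_G(u)\ge k+1$ this forces $p\ge k+2$, so it can only occur for $T=T_5$ with $d_G(u)=k+1$; since $\sum_{x\in N^{1}(u)}d_{L_u}(x)\ge k(n-k)$ while $|N^{1}(u)|=k+1$, some neighbour $x_0$ of $u$ has $d_{L_u}(x_0)=\Omega(n)$, and I would build $T_5$ rooted at $x_0$, letting $u$ be one child of $x_0$ and drawing the rest from $N(x_0)$, as in Case~1 of Theorem~\ref{th:trees1} (an analogue of Claim~\ref{cl:trees3} supplying the needed neighbour). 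Case~(ii) goes through essentially unchanged: with threshold $2k+1$ and $|C|\le p'-1=k-1$ the same estimate still contradicts $B_u\ge0$, so $|C|\ge p'=k$; choosing $C'\subseteq C$ of size $k$, $C''$ of size $p-k\in\{1,2\}$, and $X$ as there, each $y\in C'$ has at least $d_{L_u}(y)-(p-1)\ge(2k+1)-(p-1)=e(F_T)$ neighbours in $X$, so Lemma~\ref{le:trees4} embeds $F_T$ with centres in $N^{1}(u)$ and $u$ together with $C''$ completes a copy of $T$.

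\textbf{Case (iii), and the main obstacle.} As in Theorem~\ref{th:trees1}, split on $e(N^{1}(u),N^{2}(u))$. If this cut exceeds $(k+2)d_G(u)+2k|N^{2}(u)|-k(n-k)$, then $e(L_u)>k\,|V(L_u)|$, so $L_u$ contains $T$ minus a cherry-leaf (an order-$(2k+2)$, diameter-$\le4$ tree) by Lemma~\ref{le:trees3}, and adjoining $u$ completes it exactly as in Subcase~3.1 of Theorem~\ref{th:trees1}. If the cut is at most this, then $G'\colonequals G[N^{1}(u)]$ satisfies $e(G')\ge(k-2)|V(G')|-k^{2}+k$; and since $u$ dominates $N^{1}(u)$ and $d_G(u)=\Omega(n)$, producing $T_5$ (resp.\ $T_4$) rooted at $u$ reduces to finding a matching of size $k$ in $L_u$ (resp.\ a vertex-disjoint path of length two together with $k-1$ edges, all ``rooted'' in $N^{1}(u)$), plus two spare neighbours of $u$. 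So it remains to refute $\nu(L_u)\le k-1$, and this is the one essentially new and hardest step.

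\textbf{Handling $\nu(L_u)\le k-1$.} The obstacle is that the bound $e(G')\ge(k-2)|V(G')|-k^{2}+k$ forced by $B_u\ge0$ lies just below the Erd\H{o}s--Gallai / Tur\'{a}n threshold that would hand over the matching for free, which is precisely why the $p'=k-1$ instances in Section~\ref{sec:trees results1} already require the delicate Subcases~3.2.1--3.2.2; for $p'=k$ the same phenomenon is one notch sharper. My plan is to mimic that analysis: assuming $\nu(L_u)\le k-1$, use the extremal structure of matching-free graphs to find a set $A$ with $|A|\le k-1$ covering all but $O(k^{2})$ edges of $L_u$; conclude that $\{u\}\cup A$ is a near-dominating set of size $\le k$, so $G$ is within $O(k^{2})$ edges of $S_{n,k}$; and then push the resulting sharp upper bounds on $e(G')$, on $e(N^{1}(u),N^{2}(u))$ and on $d_G(u)$ --- estimates in the spirit of Claim~\ref{cl:trees7} --- against the lower bound above to reach a numerical contradiction. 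Forcing a contradiction out of inequalities tight to within an additive $O(k^{2})$ is where essentially all the effort lies; everything else is a faithful transcription of Section~\ref{sec:trees results1}.
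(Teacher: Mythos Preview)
Your proposal rests on a misidentification of the two trees in question, and this error propagates through the whole argument. In the paper, both $T_4$ and $T_5$ satisfy $p=p'=k$ and $e(F_T)=k+2$, with $F_{T_4}=2K_{1,2}\cup(k-2)K_{1,1}$ and $F_{T_5}=K_{1,3}\cup(k-1)K_{1,1}$; the trees you describe (with $(p,e(F_T))=(k+1,k+1)$ and $(k+2,k)$, and $F_T=K_{1,2}\cup(k-1)K_{1,1}$ resp.\ $kK_{1,1}$) are the paper's $T_3$ and $T_2$, which are treated separately in Theorem~\ref{th:trees3}. Consequently your Case~(i) is vacuous for the actual $T_4,T_5$ (since $p=k$ and $d_G(u)\ge k+1$ already gives $d_G(u)\ge p+1$), and your Case~(iii) reduction ``producing $T$ rooted at $u$ reduces to finding a matching of size $k$'' is simply false for the correct trees: what one needs is a copy of $2K_{1,2}\cup(k-2)K_{1,1}$ (resp.\ $K_{1,3}\cup(k-1)K_{1,1}$) with all star-centres in $N^1(u)$, which is a strictly harder forbidden-star-forest problem than bounding the matching number.

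Beyond the misidentification, the paper's proof of Case~(iii) does not carry over the two-way split from Section~\ref{sec:trees results1}; it uses a \emph{three}-way split on $e(N^1(u),N^2(u))$, with the middle range $(k+2)d_G(u)+2k|N^2(u)|-k(n-k)-2k^2-6k-10\le e(N^1(u),N^2(u))\le(k+2)d_G(u)+2k|N^2(u)|-k(n-k)$ handled by first bounding $|V(G)\setminus(N^1(u)\cup N^2(u)\cup\{u\})|\le k+5$, then embedding an auxiliary $F_T^{\ast\ast}$ in $G'$, and finally a careful edge-count between $N^1(u)$ and $N^2(u)$ recorded as Fact~\ref{fa:trees1}. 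Your sketch of ``pushing inequalities tight to within $O(k^2)$'' is neither aimed at the right target (it is calibrated to a matching-number obstruction that is irrelevant here) nor detailed enough to substitute for this middle subcase, which is where essentially all the new work in the paper's proof lies.
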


\begin{proof}
We use the same set-up and notation as in the first paragraphs of the proof of Theorem~\ref{th:trees1}. Similarly as in the proof of Theorem~\ref{th:trees1}, we suppose that $G$ is an $n$-vertex graph with $\mu(G)\geq \mu(S_{n,k})$ and that $G$ contains no copy of $T$ for some $T\in \{T_{4},T_{5}\}$. Since $S_{n,k}$ contains all trees in $\{T_{4},T_{5}\}$, we may assume that $G\neq S_{n,k}$. In addition, we have $p=p'=k$ and $e(F_{T})=k+2$. Like in the proof of Theorem~\ref{th:trees1}, we can also find a vertex $u\in V(G)$ with $B_{u}\geq0$, $d_{G}(u)\geq k+1$, and define $L_{u}$. Now we have $d_{G}(u)\geq k+1=p+1$. Next we consider two cases: (i) $p+1\leq d_{G}(u)\leq\frac{n}{k+2}$ and (ii) $d_{G}(u)>\frac{n}{k+2}$.

\medskip\noindent
{\bf Case 1.} $p+1\leq d_{G}(u)\leq \frac{n}{k+2}$.
\vspace{0.05cm}

Let $C=\{x\in N^{1}(u)\colon\, d_{L_{u}}(x)\geq2k+1\}$. We claim that $|C|\geq k$. Otherwise, if $|C|\leq k-1$, then by equality (\ref{eq:trees1}), we have
\begin{align*}
B_{u}&=~\sum\limits_{x\in N^{1}(u)}d_{L_{u}}(x)-(k-2)d_{G}(u)-k(n-k)\\
&\leq~(k-1)(n-2)+2k(d_{G}(u)-k+1)-(k-2)d_{G}(u)-k(n-k)\\
&=~d_{G}(u)(k+2)-n-k^{2}+2<0,
\end{align*}
contradicting the fact that $B_{u}\geq0$.

We choose $Y\subseteq C$ with $|Y|=k$. Let $X=(N^{1}(u)\cup N^{2}(u))\setminus Y$. Note that for any $y\in Y$, we have that $y$ has at least $2k+1-(k-1)=k+2=e(F_{T})$ neighbors in $X$. Then by Lemma~\ref{le:trees4}, there is a copy of $F_{T}$ between $X$ and $Y$. Together with $u$ this copy of $F_{T}$ forms a copy of $T$, a contradiction.

\medskip\noindent
{\bf Case 2.} $d_{G}(u)>\frac{n}{k+2}$.
\vspace{0.05cm}

We consider three subcases based on the number of edges between $N^{1}(u)$ and $N^{2}(u)$.

\medskip\noindent
{\bf Subcase 2.1.} $e(N^{1}(u),N^{2}(u))>(k+2)d_{G}(u)+2k|N^{2}(u)|-k(n-k)$.
\vspace{0.05cm}

Similarly as in the proof of Subcase~3.1 of Theorem~\ref{th:trees1}, we have $e(L_{u})>\frac{2k+2-2}{2}|V(L_{u})|$, and we conclude that there exists a copy of $T$ in $G$, a contradiction.

\medskip\noindent
{\bf Subcase 2.2.} $e(N^{1}(u),N^{2}(u))<(k+2)d_{G}(u)+2k|N^{2}(u)|-k(n-k)-2k^{2}-6k-10$.
\vspace{0.05cm}

Let $G'=G[N^{1}(u)]$. In this case, we have
\begin{align*}
e(G')&=~\frac{1}{2}\Bigg(\sum_{x\in N^{1}(u)}d_{L_{u}}(x)-e(N^{1}(u),N^{2}(u))\Bigg)\\
&>~\frac{1}{2}((k-2)d_{G}(u)+k(n-k)-(k+2)d_{G}(u)-2k|N^{2}(u)|+k(n-k)+2k^{2}+6k+10)\\
&=~-2d_{G}(u)+k(n-|N^{2}(u)|)+3k+5\geq -2d_{G}(u)+k(d_{G}(u)+1)+3k+5\\
&=~(k-2)|V(G')|+4k+5.
\end{align*}

Note that $F_{T_{4}}=2K_{1,2}\cup(k-2)K_{1,1}$ and $F_{T_{5}}=K_{1,3}\cup(k-1)K_{1,1}$. Let $F_{T_{4}}^{\ast}=2K_{1,2}\cup(k-3)K_{1,1}$ and $F_{T_{5}}^{\ast}=K_{1,3}\cup(k-2)K_{1,1}$. By Lemma~\ref{le:trees2}, we have $e(N,F_{T}^{\ast})\leq(k-2)(N-k+2)+{k-2 \choose 2}=(k-2)N-\frac{k^{2}}{2}+\frac{3k}{2}-1$. Thus $G'$ contains a copy of $F_{T}^{\ast}$. Similarly  as in the proof of Claim~\ref{cl:trees6}, we can deduce that $e(Y,W)\leq2\cdot 4+k-3=k+5$. Moreover, there are no edges within $W$ and $e(X,W)\leq(|V(G')|-2k)(k-2)$. Thus $e(G')\leq {2k \choose 2}+k+5+(|V(G')|-2k)(k-2)=(k-2)|V(G')|+4k+5$, a contradiction.

\medskip\noindent
{\bf Subcase 2.3.} $(k+2)d_{G}(u)+2k|N^{2}(u)|-k(n-k)-2k^{2}-6k-10\leq e(N^{1}(u),N^{2}(u))\leq (k+2)d_{G}(u)+2k|N^{2}(u)|-k(n-k)$.
\vspace{0.05cm}

We first claim that $|V(G)\setminus (N^{1}(u)\cup N^{2}(u)\cup\{u\})|\leq k+5$. Otherwise,
\begin{align*}
e(G')&=~\frac{1}{2}\Bigg(\sum_{x\in N^{1}(u)}d_{L_{u}}(x)-e(N^{1}(u),N^{2}(u))\Bigg)\\
&\geq~\frac{1}{2}((k-2)d_{G}(u)+k(n-k)-(k+2)d_{G}(u)-2k|N^{2}(u)|+k(n-k))\\
&=~-2d_{G}(u)+k(n-|N^{2}(u)|)-k^{2}> -2d_{G}(u)+k(d_{G}(u)+1+k+6)-k^{2}\\
&=~(k-2)|V(G')|+7k>(k-2)|V(G')|+4k+5.
\end{align*}
In this case, we can derive a contradiction by analogous arguments as in Subcase 2.2. Using the claim, we have
\begin{align*}
e(N^{1}(u),N^{2}(u))&\geq~(k+2)d_{G}(u)+2k|N^{2}(u)|-k(n-k)-2k^{2}-6k-10\\
&=~k(|N^{1}(u)|+|N^{2}(u)|)+2|N^{1}(u)|+k|N^{2}(u)|-kn-k^{2}-6k-10\\
&\geq~k(n-1-k-5)+2|N^{1}(u)|+k|N^{2}(u)|-kn-k^{2}-6k-10\\
&=~2|N^{1}(u)|+k|N^{2}(u)|-2k^{2}-12k-10.
\end{align*}

Moreover, $e(G')\geq -2d_{G}(u)+k(n-|N^{2}(u)|)-k^{2}\geq -2d_{G}(u)+k(|N^{1}(u)|+1)-k^{2}=(k-2)|N^{1}(u)|-k^{2}+k$. Let $F_{T_{4}}^{\ast\ast}=2K_{1,2}\cup(k-4)K_{1,1}$ and $F_{T_{5}}^{\ast\ast}=K_{1,3}\cup(k-3)K_{1,1}$. By Lemma~\ref{le:trees2}, we have that $G'$ contains a copy of $F_{T}^{\ast\ast}$. For this copy, let $X=\{x_{1},\ldots,x_{k-2}\}$, $Y=V(F_{T}^{\ast\ast})\setminus X$ and $W=N^{1}(u)\setminus (X\cup Y)$. Without loss of generality, if $x_{i}y$ is an edge of $K_{1,m_{i}}$ for some $y\in Y$ and $m_{i}=1$, then we assume that $e(\{x_{i}\},N^{2}(u))\geq e(\{y\},N^{2}(u))$. In order to avoid a $T$ in $G$, there is no $F_{T}$ in $G[N^{1}(u)\cup N^{2}(u)]$ whose centres are all contained in $N^{1}(u)$. We can use this to prove the following facts.

\begin{fact}\label{fa:trees1} The following statements hold.

\begin{itemize}
\item[{\rm (i)}] $e(W,N^{2}(u))\leq \max\{|W|,|N^{2}(u)|\}$.
\item[{\rm (ii)}] If $e(W,N^{2}(u))\neq0$, then $e(Y,N^{2}(u))\leq 2k+4$.
\item[{\rm (iii)}] If $|N^{2}(u)|\geq7k$, then $e(Y,N^{2}(u))\leq 2|N^{2}(u)|+k-2$.
\end{itemize}
\end{fact}

\begin{proof}
(i) If $e(W,N^{2}(u))\geq \max\{|W|,|N^{2}(u)|\}+1$, then there exists a $2K_{1,1}$ between $W$ and $N^{2}(u)$. Thus there is an $F_{T}$ whose centers are all contained in $N^{1}(u)$, a contradiction.

(ii) If $e(W,N^{2}(u))\geq1$, then we may assume that $wz\in E(G)$ with $w\in W$ and $z\in N^{2}(u)$. For any $y\in Y$, if $y$ is contained in $K_{1,m_{i}}$ of $F_{T}^{\ast\ast}$ with $m_{i}>1$, then $e(\{y\},N^{2}(u))\leq3$. Otherwise there exists an $F_{T}$ with centers $y,w,x_{1},\ldots,x_{k-2}$, a contradiction. Similarly, for any $y\in Y$, if $y$ is contained in $K_{1,1}$ of $F_{T}^{\ast\ast}$, then $e(\{y\},N^{2}(u))\leq2$. Thus $e(Y,N^{2}(u))\leq4\cdot 3+2(k-4)=2k+4$.

(iii) We first consider the case $T=T_{4}$. We may label the vertices in $Y$ using $a,b,c,d,y_{3},\ldots,$ $y_{k-2}$ such that $x_{1}a$, $x_{1}b$, $x_{2}c$, $x_{2}d$, $x_{3}y_{3}$, \ldots, $x_{k-2}y_{k-2}\in E(F_{T_{4}}^{\ast\ast})$. If $e(\{a\},N^{2}(u))\geq4$, then $e(\{c\},N^{2}(u))\leq1$, $e(\{d\},N^{2}(u))\leq1$ and $e(\{y_{i}\},N^{2}(u))\leq1$ for all $3\leq i\leq k-2$. By symmetry, similar statements also hold for $b$, $c$ and $d$. Moreover, for any $i\in \{3,\ldots,k-2\}$ , if $e(\{y_{i}\},N^{2}(u))\geq4$, then $e(\{y_{j}\},N^{2}(u))\leq1$ for $j\in \{3,\ldots,k-2\}\setminus \{i\}$. Thus $e(Y,N^{2}(u))\leq \max\{2|N^{2}(u)|+k-2,\ 3\cdot 4+|N^{2}(u)|+k-5,\ 3\cdot 4+3(k-4)\}= 2|N^{2}(u)|+k-2$.

For the case $T=T_{5}$ we can deduce $e(Y,N^{2}(u))\leq \max\{|N^{2}(u)|+k-1,\ 3\cdot 4+|N^{2}(u)|+k-4,\ 3\cdot 4+3(k-3)\}\leq2|N^{2}(u)|+k-2$.
\end{proof}

Recall that $|V(G)\setminus (N^{1}(u)\cup N^{2}(u)\cup\{u\})|\leq k+5$. Hence, $|W|+|N^{2}(u)|\geq n-1-k-5-(2k-2)=n-3k-4$. Moreover, $e(N^{1}(u),N^{2}(u))\geq 2|N^{1}(u)|+k|N^{2}(u)|-2k^{2}-12k-10$ implies $|N^{2}(u)|\geq2$. We claim that $e(W,N^{2}(u))=0$. Otherwise by Fact~\ref{fa:trees1} (i) and (ii), we have
\begin{align*}
e(X,N^{2}(u))&=e(N^{1}(u),N^{2}(u))-e(W,N^{2}(u))-e(Y,N^{2}(u))\\
&\geq~2|N^{1}(u)|+k|N^{2}(u)|-2k^{2}-12k-10-(|W|+|N^{2}(u)|)-2k-4\\
&=~(k-2)|N^{2}(u)|+|N^{2}(u)|+|W|+2(2k-2)-2k^{2}-14k-14\\
&>~(k-2)|N^{2}(u)|=|X||N^{2}(u)|,
\end{align*}
which is a contradiction.

Hence $e(W,N^{2}(u))=0$. If $2\leq |N^{2}(u)|<7k$, then $e(N^{1}(u),N^{2}(u))\leq |X\cup Y||N^{2}(u)|<14k(k-1)$, which contradicts $e(N^{1}(u),N^{2}(u))\geq 2|N^{1}(u)|+k|N^{2}(u)|-2k^{2}-12k-10$ for sufficiently large $n$. If $|N^{2}(u)|\geq7k$, then by Fact~\ref{fa:trees1} (iii),  we have
\begin{align*}
&~e(N^{1}(u),N^{2}(u))\leq (k-2)|N^{2}(u)|+2|N^{2}(u)|+k-2=k|N^{2}(u)|+k-2\\
=&~2|N^{1}(u)|+k|N^{2}(u)|-2k^{2}-12k-10-2|N^{1}(u)|+2k^{2}+12k+10+k-2\\
<&~2|N^{1}(u)|+k|N^{2}(u)|-2k^{2}-12k-10,
\end{align*}
which contradicts $e(N^{1}(u),N^{2}(u))\geq 2|N^{1}(u)|+k|N^{2}(u)|-2k^{2}-12k-10$.
This completes the proof of Theorem~\ref{th:trees2}.
\end{proof}

Next, we consider the trees $T_2$ and $T_3$.

\begin{theorem}\label{th:trees3} For integers $k\geq3$ and $n$ sufficiently large, every graph $G$ of order $n$ with $\mu(G)\geq \mu(S_{n,k})$ and $G\neq S_{n,k}$ contains all trees $T\in \{T_{2},T_{3}\}$.
\end{theorem}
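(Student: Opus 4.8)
The plan is to follow the same skeleton as the proofs of Theorems~\ref{th:trees1} and~\ref{th:trees2}. Recall that $T_2$ and $T_3$ are the remaining trees in $\mathcal{T}_{2k+3}^4$ with $p'=k$; from Figure~\ref{trees1}, these are the two trees whose star forest after deleting the root is $F_{T_2}=K_{1,2}\cup(k-1)K_{1,1}$ with one extra leaf at the root, and $F_{T_3}$ correspondingly, so that in both cases $p=p'=k$ and $e(F_T)=k+2$, but unlike $T_4,T_5$ the root here has an additional pendant structure accounting for the two ``missing'' vertices of the $2k+3$. As before, I would suppose for contradiction that $G$ is an $n$-vertex graph with $\mu(G)\ge\mu(S_{n,k})$, $G\ne S_{n,k}$, and $G$ is $T$-free for some $T\in\{T_2,T_3\}$; set $f(x)=x^2-(k-1)x-k(n-k)$, $B=f(A(G))$, find $u$ with $B_u\ge 0$, derive $d_G(u)\ge k+1$ exactly as in Claim~\ref{cl:trees1}, and form $L_u$ with the identity~(\ref{eq:trees1}).

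Then I would split on $d_G(u)$. For the ``small degree'' range $p+1\le d_G(u)\le\frac{n}{k+2}$, the argument of Case~1 of Theorem~\ref{th:trees2} carries over almost verbatim: the set $C=\{x\in N^1(u)\colon d_{L_u}(x)\ge 2k+1\}$ has size at least $k$ (same counting via~(\ref{eq:trees1}) using $|C|\le k-1$ leads to $B_u<0$), and then Lemma~\ref{le:trees4} applied with $Y\subseteq C$, $|Y|=k$, and $X=(N^1(u)\cup N^2(u))\setminus Y$ produces a copy of $F_T$ whose centers lie in $Y$; since each $y\in Y$ retains at least $2k+1-(k-1)=k+2=e(F_T)$ neighbours in $X$, the hypotheses of Lemma~\ref{le:trees4} hold. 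The only real difference is that after embedding $F_T$ I must reattach the extra pendant vertices of $T$ at the root: since $d_G(u)\ge k+1>p$, there are spare vertices of $N^1(u)$ available, and for the pendant-of-pendant structure I use a vertex of $N^2(u)$ adjacent to one of the chosen neighbours of $u$ — I would need to be slightly careful here that such a vertex exists and is not already used, which is where checking the exact shape of $T_2,T_3$ matters.

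For the ``large degree'' range $d_G(u)>\frac{n}{k+2}$, I would again split on $e(N^1(u),N^2(u))$ into the three subcases used for $T_4,T_5$. When $e(N^1(u),N^2(u))$ is large, $e(L_u)>(k-1)|V(L_u)|$ and Lemma~\ref{le:trees3} applied to $T'=T-w$ (for a suitable degree-$\le 2$ leaf $w$ of $T$) embeds $T'$ and then $T$, as in Subcase~3.1/2.1. When $e(N^1(u),N^2(u))$ is small, $e(G')\ge(k-2)|V(G')|+\Theta(1)$ with a positive constant, and I embed $F_T^{\ast}$ (removing one $K_{1,1}$) or $F_T^{\ast\ast}$ (removing two) inside $G'=G[N^1(u)]$ via Lemmas~\ref{le:trees2} and~\ref{le:trees3}, bound $e(Y,W)$, show $W$ is independent, and bound $e(X,W)\le(k-2)|W|$ to contradict the edge count. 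For the intermediate subcase I would replicate Subcase~2.3: first bound $|V(G)\setminus(N^1(u)\cup N^2(u)\cup\{u\})|$, then embed $F_T^{\ast\ast}$, partition $N^1(u)=X\cup Y\cup W$, and prove the analogues of Fact~\ref{fa:trees1}: $e(W,N^2(u))\le\max\{|W|,|N^2(u)|\}$; if $e(W,N^2(u))\ne 0$ then $e(Y,N^2(u))=O(k)$; and if $|N^2(u)|$ is large then $e(Y,N^2(u))\le 2|N^2(u)|+O(k)$. Combining these with the lower bound $e(N^1(u),N^2(u))\ge 2|N^1(u)|+k|N^2(u)|-O(k^2)$ forces $e(X,N^2(u))>|X||N^2(u)|$, a contradiction.

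The main obstacle I anticipate is the reattachment of the extra pendant vertices of $T_2$ and $T_3$ at the root in each embedding — this is exactly the feature that distinguishes these two trees from $T_4,T_5$, and it requires that whenever we have placed $F_T$ (or $F_T^{\ast}$, $F_T^{\ast\ast}$) with its star-centers mapped into one side, there remain enough unused vertices adjacent to $u$ (and, for the depth-two pendant, an unused common neighbour of $u$ and one of those vertices) to complete $T$. In the small-degree case this is guaranteed by $d_G(u)\ge k+1$ together with the structure of $L_u$; in the large-degree case one must track that the constants in the edge-count lower bounds for $e(G')$ and $e(N^1(u),N^2(u))$ are chosen generously enough to absorb the two extra vertices, which is why the thresholds in the three subcases (e.g.\ the $-2k^2-6k-10$ and $-2k^2-12k-10$ terms for $T_4,T_5$) may need to be adjusted. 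I expect the verification to be routine once the figure is consulted, but it is the one place where the proof genuinely differs from the preceding two theorems. A secondary point is that the bound on $k$ is now $k\ge 3$ rather than $k\ge 8$, so any step that previously used $k\ge 8$ (for instance, to guarantee $F_T$ contains a $K_{1,1}$) must be re-examined — but since $F_{T_2}$ and $F_{T_3}$ contain at least $k-1\ge 2$ copies of $K_{1,1}$ explicitly, this causes no difficulty.
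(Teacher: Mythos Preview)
Your proposal rests on a misreading of $T_2$ and $T_3$. From Figure~\ref{trees1} and the parameters recorded in the paper, $F_{T_2}=kK_{1,1}$ with $p=k+2$, $p'=k$, $e(F_{T_2})=k$, while $F_{T_3}=K_{1,2}\cup(k-1)K_{1,1}$ with $p=k+1$, $p'=k$, $e(F_{T_3})=k+1$. In neither case is $p=p'$ or $e(F_T)=k+2$; those are the values for $T_4,T_5$. Because $p>k$ here, the range $k+1\le d_G(u)\le p-1$ is nonempty for $T_2$ (namely $d_G(u)=k+1$), so a genuine Case~1 in the style of Claim~\ref{cl:trees3} is needed, which your outline omits. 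The ``reattachment of extra pendant vertices'' you worry about is simply the $p-p'\in\{1,2\}$ trivial stars at the root; once $p$ is identified correctly this is handled exactly as in Case~2 of Theorem~\ref{th:trees1}, not by hunting for an extra vertex in $N^2(u)$.

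The more serious gap is in Case~3. Your plan is to replicate the three-subcase edge-counting contradiction from Theorem~\ref{th:trees2}, but this cannot succeed: $S_{n,k}$ itself is $T_2$-free and $T_3$-free (this is precisely why the statement carries the extra hypothesis $G\neq S_{n,k}$, unlike Theorem~\ref{th:trees2}), and $S_{n,k}$ satisfies every edge inequality your scheme would produce. So no purely numerical contradiction is available in the final subcase. The paper's proof instead uses only two subcases in Case~3, and in Subcase~3.2 it argues structurally: after locating $F_T^\ast$ in $G'=G[N^1(u)]$ and showing that many vertices of $Y\cup W$ are completely joined to $X$ with $|X|=k-1$, it deduces $e(Y\cup W)=0$, $e(Y\cup W,N^2(u))=0$, and that every $v\in N^2(u)$ has no neighbour outside $N^1(u)\cup\{u\}$, forcing $G$ to be a \emph{subgraph} of $S_{n,k}$. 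Only then does the hypothesis $G\neq S_{n,k}$ (together with $\mu(G)\ge\mu(S_{n,k})$) yield the contradiction. Your outline never invokes $G\neq S_{n,k}$ beyond the opening sentence, and without that structural endgame the argument does not close. (A minor further point: the paper first reduces to connected $G$, which is needed so that bounding the edges in $\{u\}\cup N^1(u)\cup N^2(u)$ actually bounds $G$.)
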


\begin{proof}
The proof is modelled along similar lines as our proof of Theorem~\ref{th:trees1}. However, we first show that we may assume that $G$ is connected.

Let $G$ be an $n$-vertex graph with $\mu(G)\geq \mu(S_{n,k})$ and $G\neq S_{n,k}$. If $G$ is disconnected, then $G$ has a component $G'$ with $\mu(G)=\mu(G')$. Let $n'=|V(G')|$. Then $n'>\mu(G')=\mu(G)\geq \mu(S_{n,k})=\Omega(n^{1/2})$, which we assume to be sufficiently large. Moreover, we have $\mu(G')=\mu(G)\geq \mu(S_{n,k})>\mu(S_{n',k})$ and $G'\neq S_{n',k}$. In this case, we can consider $G'$ instead of $G$. Hence, we may assume that $G$ is connected.

For a contradiction, suppose that there exists a tree $T\in \{T_{2},T_{3}\}$ which is not a subgraph of $G$. Note that for $T=T_{2}$ we have $p=k+2$, $p'=k$ and $e(F_{T_{2}})=k$, and for $T=T_{3}$ we have $p=k+1$, $p'=k$ and $e(F_{T_{3}})=k+1$.

Similarly as in the proof of Theorem~\ref{th:trees1}, we can find a vertex $u\in V(G)$ with  $B_{u}\geq0$, and define $L_{u}$. Before we start our case distinction, we prove the same first claim as in the proof of Theorem~\ref{th:trees1}.

\begin{claim}\label{cl:trees8} $d_{G}(u)\geq k+1$.
\end{claim}

\begin{proof}
Since $d_{L_{u}}(x)\leq n-2$ for all $x\in N^{1}(u)$, by equality (\ref{eq:trees1}), we have

\begin{equation}\label{eq:trees7}
0\leq B_{u}\leq d_{G}(u)(n-2)-(k-2)d_{G}(u)-k(n-k)=(d_{G}(u)-k)(n-k).
\end{equation}

Thus $d_{G}(u)\geq k$. If $d_{G}(u)\geq k+1$, then we are done. If $d_{G}(u)=k$, then inequality (\ref{eq:trees7}) implies $0=B_{u}=d_{G}(u)(n-2)-(k-2)d_{G}(u)-k(n-k)$. This implies that $d_{L_{u}}(x)=n-2$ for all $x\in N^{1}(u)$. Then $G$ contains $S_{n,k}$ as a subgraph. Since $G\neq S_{n,k}$, $G$ contains $S_{n,k}^{+}$ as a subgraph. Thus $G$ contains a copy of $T$, which is a contradiction.
\end{proof}

We apply the same case distinction as in the proof of Theorem~\ref{th:trees1}. We divide the rest of the proof into three cases: (i) $d_{G}(u)\leq p-1$, (ii) $p\leq d_{G}(u)\leq \frac{n}{k+2}$ and (iii) $d_{G}(u)>\frac{n}{k+2}$.

\medskip\noindent
{\bf Case 1.} $d_{G}(u)\leq p-1$.
\vspace{0.05cm}

In this case, using Claim~\ref{cl:trees8} and that $p=k+1$ for $T=T_{3}$, we have $T=T_{2}$ and $d_{G}(u)=k+1$. For $x\in N^{1}(u)$, let $C(x)=\{y\in N^{1}(x)\cap N^{2}(u)\colon\, d_{L_{u}}(y)\geq k\}$. We first prove another claim in order to complete the proof for this case.

\begin{claim}\label{cl:trees9} There exists a vertex $x\in N^{1}(u)$ with $|C(x)|\geq k-1$ and $|N^{1}(x)\cap N^{2}(u)|\geq k+1$.
\end{claim}

\begin{proof}
Suppose that $|C(x)|<k-1$ for every $x\in N^{1}(u)$. Then
\begin{align*}\label{eq:trees3}
\sum_{x\in N^{1}(u)}~\sum_{y\in N^{1}(x)\cap N^{2}(u)}d_{L_{u}}(y)\leq&~d_{G}(u)((k-2)d_{G}(u)+(n-1-d_{G}(u)-k+2)(k-1))\\
=&~d_{G}(u)(n(k-1)-d_{G}(u)-(k-1)^{2})\leq(k^{2}-1)n.
\end{align*}

On the other hand, we have
\begin{align*}
&~\sum_{x\in N^{1}(u)}~\sum_{y\in N^{1}(x)\cap N^{2}(u)}d_{L_{u}}(y)=\sum_{y\in N^{2}(u)}d_{L_{u}}^{2}(y)\geq \frac{1}{|N^{2}(u)|}\Bigg(\sum_{y\in N^{2}(u)}d_{L_{u}}(y)\Bigg)^{2}\\
\geq&~\frac{1}{|N^{2}(u)|}\Bigg(\sum_{x\in N^{1}(u)}d_{L_{u}}(x)-|N^{1}(u)|(|N^{1}(u)|-1)\Bigg)^{2}\\
>&~\frac{1}{n}\left((k-2)d_{G}(u)+k(n-k)-d_{G}(u)(d_{G}(u)-1)\right)^{2}=\frac{1}{n}\left(k(n-k)-2(k+1)\right)^{2}.
\end{align*}
For
sufficiently large $n$, we have $(k^{2}-1)n~<~\frac{1}{n}\left(k(n-k)-2(k+1)\right)^{2}$, a contradiction. Thus there exists a vertex $x\in N^{1}(u)$ with $|C(x)|\geq k-1$.

If $|N^{1}(x)\cap N^{2}(u)|\leq k$ for every $x\in N^{1}(u)$ with $|C(x)|\geq k-1$, then
\begin{align*}
&\sum_{x\in N^{1}(u)}~\sum_{y\in N^{1}(x)\cap N^{2}(u)}d_{L_{u}}(y)\\
=&~\sum_{x\in N^{1}(u),|C(x)|\geq k-1}~\sum_{y\in N^{1}(x)\cap N^{2}(u)}d_{L_{u}}(y)+\sum_{x\in N^{1}(u),|C(x)|\leq k-2}~\sum_{y\in N^{1}(x)\cap N^{2}(u)}d_{L_{u}}(y)\\
\leq&~\sum_{x\in N^{1}(u),|C(x)|\leq k-2}((k-2)|N^{1}(u)|+(n-1-|N^{1}(u)|-k+2)(k-1))\\
&+~\sum_{x\in N^{1}(u),|C(x)|\geq k-1}k|N^{1}(u)|\\
<&~\sum_{x\in N^{1}(u),|C(x)|\leq k-2}(k-1)n+\sum_{x\in N^{1}(u),|C(x)|\geq k-1}k(k+1)\leq(k^{2}-k)n+k(k+1),
\end{align*}
which contradicts $\sum_{x\in N^{1}(u)}\sum_{y\in N^{1}(x)\cap N^{2}(u)}d_{L_{u}}(y)\geq \frac{1}{n}\left(k(n-k)-2(k+1)\right)^{2}$. Thus there exists a vertex $x\in N^{1}(u)$ with $|C(x)|\geq k-1$ and $|N^{1}(x)\cap N^{2}(u)|\geq k+1$.
\end{proof}

By Claim~\ref{cl:trees9}, there is a vertex $x\in N^{1}(u)$ with $|C(x)|\geq k-1$ and $|N^{1}(x)\cap N^{2}(u)|\geq k+1$. Let $X=N^{1}(u)\setminus \{x\}$ and $Y=C(x)\cup \{u\}$. By Lemma~\ref{le:trees4}, there exists a copy of $T$ centered at $x$, a contradiction.

\medskip\noindent
{\bf Case 2.} $p\leq d_{G}(u)\leq \frac{n}{k+2}$.
\vspace{0.05cm}

Let $C=\{x\in N^{1}(u)\colon\, d_{L_{u}}(x)\geq2k+1\}$. We claim that $|C|\geq k$. Otherwise, if $|C|\leq k-1$, then by equality (\ref{eq:trees1}), we have
\begin{align*}
B_{u}&=\sum\limits_{x\in N^{1}(u)}d_{L_{u}}(x)-(k-2)d_{G}(u)-k(n-k)\\
&\leq~(k-1)(n-2)+2k(d_{G}(u)-k+1)-(k-2)d_{G}(u)-k(n-k)\\
&=~d_{G}(u)(k+2)-n-k^{2}+2<0,
\end{align*}
contradicting the fact that $B_{u}\geq0$.

We choose $Y\subseteq C$ with $|Y|=k$ and $C'\subseteq N^{1}(u)\setminus Y$ with $|C'|=p-p'$. Let $X=(N^{1}(u)\cup N^{2}(u))\setminus (Y\cup C')$. Note that for any $y\in Y$, we have that $y$ has at least $d_{L_{u}}(y)-|(Y\cup C')\setminus \{y\}|\geq e(F_{T})$ neighbors in $X$. Then by Lemma~\ref{le:trees4}, there exists a copy of $F_{T}$ between $X$ and $Y$. Together with $u$ and $C'$ this copy of $F_{T}$ forms a copy of $T$, a contradiction.

\medskip\noindent
{\bf Case 3.} $d_{G}(u)>\frac{n}{k+2}$.
\vspace{0.05cm}

We consider two subcases based on the number of edges between $N^{1}(u)$ and $N^{2}(u)$.

\medskip\noindent
{\bf Subcase 3.1.} $e(N^{1}(u),N^{2}(u))\geq(k+2)d_{G}(u)+2k|N^{2}(u)|-k(n-k)-3k^{2}-k$.
\vspace{0.05cm}

By equality (\ref{eq:trees1}) and since $B_{u}\geq0$, we have $\sum_{x\in N^{1}(u)}d_{L_{u}}(x)\geq(k-2)d_{G}(u)+k(n-k)$. Then
\begin{align*}
e(L_{u})&=~\frac{1}{2}\Bigg(\sum_{x\in N^{1}(u)}d_{L_{u}}(x)+e(N^{1}(u),N^{2}(u))\Bigg)\\
&\geq~\frac{1}{2}((k-2)d_{G}(u)+k(n-k)+(k+2)d_{G}(u)+2k|N^{2}(u)|-k(n-k)-3k^{2}-k)\\
&=~k(d_{G}(u)+|N^{2}(u)|)-\frac{3k^{2}}{2}-\frac{k}{2}=k|V(L_{u})|-\frac{3k^{2}}{2}-\frac{k}{2}.
\end{align*}

By Lemma~\ref{le:trees2}, we have $ex(N,F_{T})=(k-1)(N-k+1)+{k-1 \choose 2}=(k-1)N-\frac{k^{2}}{2}+\frac{k}{2}$. Thus $L_{u}$ contains a copy of $F_{T}=\bigcup_{i=1}^{p'}K_{1,m_{i}}$ with $m_{1}\geq m_{2}\geq\cdots\geq m_{p'}$. If $T=T_{2}$, then $G$ contains a copy of $T$ centered at $u$, a contradiction. If $T=T_{3}$, then we assume that the center of $K_{1,m_{1}}$ is contained in $N^{2}(u)$. Otherwise there is a copy of $T_{3}$ in $G$. Let $F_{T_{3}}\cap N^{1}(u)=Y$, $F_{T_{3}}\cap N^{2}(u)=X$, $N^{1}(u)\setminus Y=W$ and $N^{2}(u)\setminus X=Z$. In order to avoid an $F_{T_{3}}$ whose centers are contained in $N^{1}(u)$, we have $e(Y,W)=0$, $e(Y,Z)=0$  and $e(\{w\},W\cup Z)\leq1$ for any $w\in W$. Moreover, for any $w\in W$, we have $e(\{w\},X)=0$ if $e(\{w\},W\cup Z)=1$. Otherwise there exists a copy of $F_{T_{3}}$ whose centers are contained in $Y\cup \{w\}$. Furthermore, for any $w\in W$, we have $e(\{w\},X\cup W\cup Z)\leq k-1$. Otherwise, if $e(\{w\},X\cup W\cup Z)\geq k$, then $|X|=k$ and $w$ is completely joined to $X$, which implies that all edges of $F_{T}$ have an end-vertex in $X$. In that case, $X\cup Y\cup\{w,u\}$ contains a copy of $T_{3}$ centered at $w$. Now we have
\begin{align*}
e(L_{u})&=~e(X\cup Y)+e(Y,W)+e(Y,Z)+e(W)+e(W,X\cup Z)\\
&\leq~{2k+1 \choose 2}+0+0+|W|(k-1)={2k+1 \choose 2}+(k-1)(|N^{1}(u)|-|Y|)\\
&=~{2k+1 \choose 2}+k|V(L_{u})|-|N^{1}(u)|-k|N^{2}(u)|-(k-1)|Y|\\
&=~k|V(L_{u})|-\frac{3k^{2}}{2}-\frac{k}{2}+\frac{3k^{2}}{2}+\frac{k}{2}+2k^{2}+k-|N^{1}(u)|-k|N^{2}(u)|-(k-1)|Y|\\
&<~k|V(L_{u})|-\frac{3k^{2}}{2}-\frac{k}{2},
\end{align*}
contradicting $e(L_{u})\geq k|V(L_{u})|-\frac{3k^{2}}{2}-\frac{k}{2}$.

\medskip\noindent
{\bf Subcase 3.2.} $e(N^{1}(u),N^{2}(u))<(k+2)d_{G}(u)+2k|N^{2}(u)|-k(n-k)-3k^{2}-k$.
\vspace{0.05cm}

Let $G'=G[N^{1}(u)]$. Then
\begin{align*}
e(G')&=~\frac{1}{2}\Bigg(\sum_{x\in N^{1}(u)}d_{L_{u}}(x)-e(N^{1}(u),N^{2}(u))\Bigg)\\
&>~\frac{1}{2}((k-2)d_{G}(u)+k(n-k)-(k+2)d_{G}(u)-2k|N^{2}(u)|+k(n-k)+3k^{2}+k)\\
&=~-2d_{G}(u)+k(n-|N^{2}(u)|)+\frac{k^{2}}{2}+\frac{k}{2}\geq(k-2)|N^{1}(u)|+\frac{k^{2}}{2}+\frac{3k}{2}.
\end{align*}

Let $F_{T}^{\ast}=\bigcup_{i=1}^{k-1}K_{1,m_{i}}$ where $m_{1}= m_{2}=\cdots=m_{k-1}=1$ if $T=T_{2}$ and $m_{1}=2$, $m_{2}=\cdots=m_{k-1}=1$ if $T=T_{3}$. By Lemma~\ref{le:trees2}, we have $ex(N,F_{T}^{\ast})\leq(k-2)N-\frac{k^{2}}{2}+\frac{3k}{2}-1$. Hence, $G'$ contains a copy of $F_{T}^{\ast}$. Let $x_{1},x_{2},\ldots,x_{k-1}$ be the centers of this $F_{T}^{\ast}$. Let $X=\{x_{1},x_{2},\ldots,x_{k-1}\}$, $Y=V(F_{T}^{\ast})\setminus X$ and $W=N^{1}(u)\setminus (X\cup Y)$. For any $y\in Y$, if $y$ is contained in $K_{1,m_{i}}$ and $m_{i}=1$, then we suppose that $e(\{x_{i}\},W)\geq e(\{y\},W)$. In order to avoid an $F_{T}$ with all centers in $N^{1}(u)$, we have $e(W)=0$, $e(W,N^{2}(u))=0$ and $e(\{y\},W)\leq1$ for $y\in Y$.

We claim that there exists $Z\subseteq Y\cup W$ with $|Z|\geq k+2$ such that every $z\in Z$ is completely joined to $X$. Otherwise,
\begin{align*}
e(G')&=~e(X, Y\cup W)+e(Y,W)+e(X)+e(Y)+e(W)\\
&\leq~(k+1)|X|+(|Y\cup W|-k-1)(|X|-1)+k+{k-1 \choose 2}+{k \choose 2}\\
&=~(k+1)(k-1)+(|N^{1}(u)|-(k-1)-k-1)(k-2)+k+{k-1 \choose 2}+{k \choose 2}\\
&=~|N^{1}(u)|(k-2)+3k,
\end{align*}
contradicting $e(G')>(k-2)|N^{1}(u)|+\frac{k^{2}}{2}+\frac{3k}{2}$. Now we have $e(Y\cup W)=0$ and $e(Y\cup W,N^{2}(u))=0$. Moreover, if $N^{2}(u)\neq\emptyset$, then for any $v\in N^{2}(u)$, $v$ has no neighbors in $G[V(G)\setminus(\{u\}\cup N^{1}(u))]$. Otherwise, if $vv'\in E(G)$ for some $v'\in V(G)\setminus(\{u\}\cup N^{1}(u))$, then there is a copy of $T$ with center $x_{1}$, and the centers of $C_{T}$ are $u,v$ and some vertices in $Z$. Hence, no matter whether $N^{2}(u)\neq\emptyset$ or $N^{2}(u)=\emptyset$, $G$ is a subgraph of $S_{n,k}$, contradicting $\mu(G)\geq \mu(S_{n,k})$ and $G\neq S_{n,k}$.
\end{proof}

\end{document}